\newtheorem*{corollary*}{Corollary}
\newtheorem{theorem}{Theorem}[section]
\newtheorem{corollary}[theorem]{Corollary}
\newtheorem{lemma}[theorem]{Lemma}
\newtheorem{proposition}[theorem]{Proposition}
\newtheorem*{proposition*}{Proposition}
\newtheorem{question}[theorem]{Question}
\newtheorem*{claim*}{Claim}
\theoremstyle{definition}
\newtheorem{definition}[theorem]{Definition}
\newtheorem{remark}[theorem]{Remark}
\newtheorem{example}[theorem]{Example}
\newtheorem*{notation}{Notation}
\newtheorem{definitiontheorem}[theorem]{Definition-Theorem}
\newtheorem{method}[theorem]{Method}
\newtheorem{problem}[theorem]{Problem}
\newtheorem*{acknowledgement}{{\bf Acknowledgement}}
\theoremstyle{remark}
\numberwithin{equation}{theorem}
\renewcommand{\mod}{\operatorname{mod}}
\newcommand{\smod}{\operatorname{\underline{mod}}}
\newcommand{\End}{\operatorname{End}}
\newcommand{\Hom}{\operatorname{Hom}}
\newcommand{\Ext}{\operatorname{Ext}}
\newcommand{\sHom}{\operatorname{\underline{Hom}}}
\newcommand{\Db}{\mathsf{D}^{\rm b}}
\newcommand{\m}{\mathsf{m}}
\newcommand{\C}{\mathsf{C}}
\newcommand{\mult}{\operatorname{\mathsf{mult}}}
\newcommand{\na}{\operatorname{\mathsf{na}}}
\newcommand{\vx}{\operatorname{\mathsf{vx}}}
\begin{document}
\title{Derived equivalences between symmetric special biserial algebras}
\date{\today}
\author{Takuma Aihara}
\address{Fakult\"at f\"ur Mathematik, Universit\"at Bielefeld, D-33501 Bielefeld, Germany}
\curraddr{Graduate School of Mathematics, Nagoya University, Furocho, Chikusaku, Nagoya 464-8602, Japan}
\email{aihara.takuma@math.nagoya-u.ac.jp}
\keywords{special biserial algebra, mutation of SB quivers, tilting mutation, Brauer graph algebra, flip of Brauer graphs}
\thanks{2010 {\em Mathematics Subject Classification.} Primary 16G10; Secondary 16G20, 18E30, 20C05}
\begin{abstract}
The notion of mutation plays crucial roles in representation theory of algebras.
Two kinds of mutation are well-known: tilting/silting mutation and quiver-mutation.
In this paper, we focus on tilting mutation for symmetric algebras.
Introducing mutation of SB quivers, 
we explicitly give a combinatorial description of tilting mutation of symmetric special biserial algebras.
As an application, we generalize Rickard's star theorem.
We also introduce flip of Brauer graphs and apply our results to Brauer graph algebras.
\end{abstract}
\maketitle

\section{Introduction}

\if0
In representation theory of algebras, 
the notion of mutation 
plays an important role.
%
We refer to two kinds of mutation: quiver-mutation and tilting/silting mutation. 
Quiver-mutation was introduced by Fomin-Zelevinsky \cite{FZ} to develop a combinatorial approach to canonical bases of quantum groups,
and yields the notion of Fomin-Zelevinsky cluster algebras
which has spectacular growth thanks to the many links with a wide range of subjects of mathematics.
%
\fi

Tilting mutation, which is a special case of silting mutation \cite{AI},
was introduced by Riedtmann-Schofield \cite{RS} and Happel-Unger \cite{HU}
to investigate the structure of the derived category.
For example, Bernstein-Gelfand-Ponomarev reflection functors \cite{BGP}, 
Auslander-Platzeck-Reiten tilting modules \cite{APR} and Okuyama-Rickard tilting complexes \cite{O,R2}
are special cases of tilting mutation.
In the case that a given algebra is symmetric, 
tilting mutation yields infinitely many tilting complexes.
From Morita theoretic viewpoint of derived categories \cite{R1}, 
tilting complexes are extremely important complexes 
since they give rise to derived equivalences which preserve many homological properties.

%


The following problem is naturally asked:
\begin{problem}\label{problem}
Give an explicit description of the endomorphism algebra of a tilting complex given by tilting mutation.
\end{problem}

In this paper we give a complete answer to this problem for symmetric special biserial algebras,
which is one of the important classes of algebras in representation theory.
Some of special biserial algebras were first studied by Gelfand-Ponomarev \cite{GP},
and also naturally appear in modular representation theory of finite groups \cite{Al,E}.
Moreover such an algebra is always representation-tame and 
the classification of all indecomposable modules of such an algebra was provided in \cite{WW,BR}.
The derived equivalence classes of special biserial algebras were also discussed in \cite{AA, BHS,K,KR, SZ}.

To realize our goal,
we start with describing symmetric special biserial algebras in terms of combinatorial data,
which we call \emph{SB quivers}.
Moreover we will study symmetric special biserial algebras from graph theoretic viewpoint,
which is discribed by \emph{Brauer graphs}.
Indeed, we have the result below (see Lemma \ref{cycle decomposition} and Proposition \ref{SSBA is BGA}):

\begin{proposition}\label{1-1}
There exist one-to-one correspondences among the following three classes:
\begin{enumerate}[{\rm (1)}]
\item Symmetric special biserial algebras;
\item Special quivers with cycle-decomposition (\emph{SB quivers});
\item Brauer graphs.
\end{enumerate}
\end{proposition}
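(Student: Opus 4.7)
The plan is to establish two bijections, (1) $\leftrightarrow$ (2) and (2) $\leftrightarrow$ (3), from which the proposition follows.

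For (1) $\Rightarrow$ (2), I would start with a symmetric special biserial algebra $A = kQ/I$ and exploit biseriality: for each arrow $\alpha$ there is at most one arrow $\beta$ with $\alpha\beta \notin I$, giving a partial permutation $\sigma$ on $Q_1$. Using the symmetric structure --- concretely, the existence of a nondegenerate associative trace form, equivalently the fact that every indecomposable projective has a one-dimensional socle isomorphic to its top --- I would show $\sigma$ extends to a total permutation whose orbits are finite. The orbit decomposition then constitutes the cycle decomposition. For (2) $\Rightarrow$ (1), I would conversely build from an SB quiver $(Q,\mathfrak{C})$ the algebra $kQ/I$, where $I$ is generated by all compositions $\alpha\beta$ that violate the cycle pairing together with relations equating the two cyclic paths through each vertex (so that each indecomposable projective carries a unique cyclic path representing its socle). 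The symmetric trace form is then exhibited explicitly on the canonical path basis by pairing each path with its complement in the ambient cyclic path.

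The bijection (2) $\leftrightarrow$ (3) is combinatorial. Given $(Q,\mathfrak{C})$, define a Brauer graph $G$ with vertex set $\mathfrak{C}$ and edge set $Q_0$; a vertex $v \in Q_0$ becomes an edge incident to the at-most-two cycles through $v$, the cyclic order of arrows along each cycle $C$ becomes the cyclic order of edges at the vertex $C$ of $G$, and multiplicity data transfers verbatim. The inverse reads off $(Q,\mathfrak{C})$ from $G$: edges of $G$ become vertices of $Q$; arrows of $Q$ run between consecutive edges at each vertex of $G$, and the cycles in $\mathfrak{C}$ are indexed precisely by the vertices of $G$. One then checks both round trips are the identity.

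The main obstacle is in (2) $\Rightarrow$ (1): verifying that the algebra built from an SB quiver is genuinely symmetric, and that the composition (1) $\to$ (2) $\to$ (1) recovers the original algebra up to isomorphism. Constructing a symmetric nondegenerate form on the path basis --- and showing the cycle decomposition extracted in (1) $\Rightarrow$ (2) is independent of the chosen presentation of $A$ --- is the delicate analytic step. Once (1) $\leftrightarrow$ (2) is secured via the referenced cycle-decomposition lemma, the bijection (2) $\leftrightarrow$ (3) is an essentially formal ``incidence versus dual incidence'' exchange and should follow by direct verification.
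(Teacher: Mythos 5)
Your proposal follows essentially the same route as the paper: the paper obtains (1)$\leftrightarrow$(2) from exactly the cycle structure on arrows you describe (Lemma \ref{cycle decomposition}, which it attributes to Roggenkamp and Antipov rather than proving in detail, with $A_{(Q,\C)}$ constructed by the same relations and the symmetric form read off the socle relations $(\alpha_1\cdots\alpha_s)^m-(\beta_1\cdots\beta_t)^{m'}$), and its Brauer-quiver construction realizes (2)$\leftrightarrow$(3) by precisely your incidence exchange (cycles of $\C$ $\leftrightarrow$ vertices of $G$, vertices of $Q$ $\leftrightarrow$ edges of $G$, cyclic order along a cycle $\leftrightarrow$ cyclic order of edges at a vertex). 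The one detail your sketch glosses over is the vertex of $Q$ lying in only one cycle: an edge of a graph needs two endpoints, so such a vertex must become an external edge of $G$ attached to an added non-exceptional leaf (which conversely contributes an empty cycle), and this convention is what forces condition (3) of the definition of cycle-decomposition, namely that loops carry multiplicity greater than $1$.
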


%

%


We introduce \emph{mutation of SB quivers} 
(see Definition \ref{QM}, \ref{QM multiplex} and \ref{QM multiplex 2}),
which is similar to 
Fomin-Zelevinsky quiver-mutation \cite{FZ}.
Moreover we will show that mutation of SB quivers corresponds to a certain operation on Brauer graphs,
which we call \emph{flip}
and is a generalization of mutation/flip of Brauer trees introduced in \cite{A2,KZ}.

The main theorem of this paper is the following:

\begin{theorem}[Theorem \ref{compatibility} and Theorem \ref{main Brauer graph}]\label{main1}
The following three operations are compatible each other:
\begin{enumerate}[{\rm (1)}]
\item Tilting mutation of symmetric special biserial algebras;
\item Mutation of SB quivers;
\item Flip of Brauer graphs.
\end{enumerate}
\end{theorem}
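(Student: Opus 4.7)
The plan is to prove the theorem in two stages, corresponding to the two referenced results. The compatibility of (2) and (3)---that flip of Brauer graphs corresponds to SB quiver mutation under the bijection of Proposition \ref{1-1}---is a purely combinatorial statement which I would verify by direct inspection. Using the explicit dictionary between SB quivers and Brauer graphs, one reads off how a local modification of the cycle-decomposition around a vertex translates into the corresponding modification of the edges incident to the edge of the Brauer graph, and checks that these agree with the respective definitions of mutation and flip. Some case analysis is unavoidable according to how many cycles pass through the chosen vertex, but once the combinatorial dictionary is fixed each case reduces to a picture-matching verification.

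The heart of the proof is the compatibility of (1) with (2): that tilting mutation at vertex $i$ of a symmetric special biserial algebra $A$ produces an algebra whose SB quiver is the mutation at $i$ of the SB quiver of $A$. I would begin by writing down the Okuyama-Rickard tilting complex $T$ associated to the simple $A$-module $S_i$, whose summands at positions $j \ne i$ are the indecomposable projectives $P_j$, and whose $i$-th summand is the two-term complex given by a minimal projective presentation of $\mathrm{rad}\,P_i$. Because $A$ is special biserial, $\mathrm{rad}\,P_i$ admits an explicit combinatorial description in terms of the cycles of the SB quiver passing through $i$. I would then compute $\End_{\Db(A)}(T)$ morphism by morphism: the $\Hom$-spaces between summands correspond to classes of paths in the original algebra with an explicit shift in homological degree, and the induced composition forces the new relations. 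Working this out yields a presentation of $\End(T)$ as a special biserial algebra whose SB quiver is, by inspection, exactly the mutation of the original at $i$.

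The main obstacle is controlling the degenerate local configurations at $i$---loops, two-cycles, and vertices belonging to only one cycle (the multiplex situations of Definitions \ref{QM multiplex} and \ref{QM multiplex 2})---in which the minimal projective presentation of $\mathrm{rad}\,P_i$ collapses and the naive description of the endomorphism algebra must be corrected. I would handle these by writing out the two-term complex in each degenerate configuration and verifying that the resulting algebra still matches the ad hoc definition of SB quiver mutation in those same cases, which is presumably why mutation of SB quivers was formulated via several distinct definitions rather than a single uniform one. Symmetry of the resulting algebra is automatic from the general theory of derived equivalences among symmetric algebras, so beyond this case analysis only the combinatorial match needs verification.
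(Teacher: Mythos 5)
Your reduction of the theorem to two compatibilities --- (2) with (3) by combinatorial inspection, and (1) with (2) via the Okuyama--Rickard complex --- matches the paper's architecture, and your handling of the first half is exactly what the paper does (its Proposition \ref{mutation and flip} is stated as a direct picture-matching verification). But for the heart of the matter, the compatibility of (1) and (2), you take a genuinely different and substantially heavier route. You propose to compute $\End_{\Db(\mod A)}(T)$ ``morphism by morphism,'' extracting a full presentation by generators and relations and reading off special biseriality from it; this is essentially the strategy of Kauer and Antipov, which the paper explicitly sets out to avoid. The paper instead first invokes the derived invariance of being symmetric special biserial (Proposition \ref{ssb under derived}, via Rickard and Pogorzaly), so that $\mu_i^+(A)$ is known \emph{a priori} to be of the form $A_{(Q',\C')}$ and the whole problem collapses to identifying the discrete data $(Q',\C')$ and the multiplicities. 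It then gets the quiver from $\dim\Ext^1$ between the simple modules of $\mu_i^+(A)$, computed through the stable equivalence and Okuyama's list of simples (Proposition \ref{simples}); gets the cycle-decomposition by exhibiting, for each new arrow, one explicit chain map between summands of $T$ and checking a handful of compositions are nonzero or zero; and gets the multiplicities from the Cartan data via Happel's alternating-sum formula (Proposition \ref{Cartan}). What the paper's approach buys is that no relation ever has to be verified to be \emph{complete}: the shape of a symmetric special biserial algebra is rigid enough that a few numerical invariants pin it down. Your approach buys nothing extra here and carries a real burden you do not address: to certify a presentation of $\End(T)$ you must match dimensions (otherwise you cannot exclude additional relations), which in practice forces you back to the Cartan-data computation anyway. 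One further small imprecision: the degree-one summand $T_i$ is built from a minimal projective presentation of $e_iA/e_iA(1-e_i)A$, not of $\operatorname{rad}P_i$; these differ precisely in the loop cases you flag as delicate (the paper's complex $P_h\oplus P_h\xrightarrow{[\alpha,\beta\alpha]}P_i$ when there is a loop $\beta$ at $i$), so you should fix the definition before entering that case analysis.
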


We note that certain special cases of the compatibility of (1) and (3) in Theorem \ref{main1}
were given by \cite{K,An} (see Remark \ref{remark of Kauer}). 

The strategy of our proof of Theorem \ref{main1} is, 
by using the fact that the property of being a symmetric special biserial algebra is derived invariant,
to resolve into the calculation of numerical invariants but not directly calculate endomorphism algebras.
Thus, our proof is simpler than that of \cite{K, An}.

As an application of Theorem \ref{main1}, we generalize ``Rickard's star theorem'' for Brauer tree algebras,
which gives nice representatives of Brauer tree algebras up to derived equivalence \cite{R2, MH}.
We introduce \emph{Brauer double-star algebras}, as the corresponding class for Brauer tree algebras (see also \cite{K,KR,Ro}),
and prove the following (see Section \ref{star theorem} for the details):

\begin{theorem}[Theorem \ref{reduction of Brauer graph}]\label{main2}
Any Brauer graph algebra is derived equivalent to a Brauer double-star algebra
whose Brauer graph has the same number of the edges and the same multiplicities of the vertices.
\end{theorem}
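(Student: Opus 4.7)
The plan is to reduce Theorem \ref{main2} to a purely combinatorial statement about Brauer graphs by invoking the compatibility established in Theorem \ref{main1}. Since tilting mutation induces derived equivalences, and under the dictionary of Theorem \ref{main1} it corresponds to flipping of Brauer graphs, it suffices to exhibit, for any Brauer graph $\Gamma$, a finite sequence of flips transforming $\Gamma$ into a Brauer double-star. A direct inspection of flip shows that this operation alters neither the number of edges nor the multiset of vertex multiplicities (only the incidence data changes), so the required numerical match in Theorem \ref{main2} is automatic once a flipping sequence producing a double-star is found.

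To produce such a sequence I would argue inductively. First, fix a spanning tree $T$ of the underlying graph of $\Gamma$ together with a distinguished vertex $v$ destined to be the first centre of the double-star. Using the tree analogue of the statement, which is essentially Rickard's original star theorem and is recovered here as the special case in which $T=\Gamma$, one can successively flip edges of $T$ until every tree edge is incident to $v$, while simultaneously tracking how the same flips act on the non-tree edges. At this intermediate stage the Brauer graph looks like a star centred at $v$ together with a collection of extra edges responsible for the cycles of $\Gamma$. A second vertex $w$ is then chosen (for instance, the far endpoint of one of the extra edges), and further flips are applied to relocate each remaining extra edge onto $w$; once this has been done the graph is, by construction, a double-star centred at $\{v,w\}$.

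The main obstacle is the bookkeeping needed to ensure that these relocations do not destroy the star structure already assembled around $v$. Each flip depends on, and in turn modifies, the cyclic orderings at the two endpoints of the flipped edge, so a careless sequence could push a tree edge away from $v$ while trying to drag an extra edge toward $w$. I expect to resolve this by ordering the flips carefully, for example always flipping the extra edge whose graph-distance from $w$ is currently largest, so that earlier placements remain undisturbed, and by analysing case by case how flip interacts with the cyclic orderings at vertices of small valence. A handful of degenerate configurations (loops, vertices of valence one, multi-edges between $v$ and $w$) should finally be treated separately to confirm that the reduction is well-defined in every case; throughout, the preservation of edge count and of the multiplicity multiset is used to conclude that the resulting double-star has the prescribed data.
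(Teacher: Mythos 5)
Your top-level reduction is exactly the one the paper uses: by Theorem \ref{main1} a flip is realized by a tilting mutation, a flip visibly preserves the number of edges and the multiset of multiplicities, so everything comes down to exhibiting a finite flip sequence taking an arbitrary Brauer graph to a double-star. The problem is that this last step \emph{is} the theorem, and your proposal does not actually supply it. You name the difficulty yourself --- that a flip of one edge perturbs the cyclic orderings at its (up to four) incident vertices and can dislodge edges already placed --- and then defer it (``I expect to resolve this by ordering the flips carefully\dots''). That deferred bookkeeping is precisely the content of the paper's Method \ref{reduction method} and of the proof of Theorem \ref{reduction theorem}: steps (R1)--(R3) there are engineered so that each mutation provably enlarges the distinguished cycle $C_0$ (equivalently, attaches one more edge to the intended centre) without detaching anything, under explicit hypotheses such as $P_h$ being uniserial or the existence of an arrow $\beta$ with $C_\beta\neq C_0$, with (R3) existing solely to clear loops out of the way before (R2) is applied. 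Without an analogue of these invariants, your greedy rule (``flip the extra edge farthest from $w$'') is not known to terminate or even to be well-directed, since a single flip moves an edge only to the cyclically \emph{next} position $v(i)$, not toward an arbitrary target; steering requires choosing between $\mu^+_i$ and $\mu^-_i$ based on the local cyclic data, which is the case analysis you have postponed.

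Two further points. First, your appeal to ``the tree analogue, which is essentially Rickard's original star theorem'' inverts the paper's logic: the paper \emph{deduces} the star theorem as Corollary \ref{Rickard reduction theorem} from this result. Using Rickard's theorem as an input is not circular as mathematics, but it only helps if you also know that the derived equivalences in its proof are realized by these flips (true, via the Brauer-tree mutations of \cite{A2,KZ}, but an additional claim to be justified), and in any case it only handles the tree edges --- the non-tree edges, loops, and multi-edges, i.e.\ everything genuinely beyond Rickard, are again left to ``a handful of degenerate configurations \dots treated separately.'' Second, the target must be a double-star in the precise sense of the paper: besides the centre $v$ and vice-centre $u$, every remaining exceptional vertex has to end up as the free end of an external edge at $v$ (a loop in the SB quiver); this is condition (4) of Theorem \ref{reduction theorem} and does not follow from merely relocating edges onto $\{v,u\}$. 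So the proposal is a reasonable plan that matches the paper's strategy in outline, but the combinatorial core --- the verified mutation algorithm --- is missing.
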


Our proof of Theorem \ref{main2} is given in terms of the corresponding symmetric special biserial algebras 
(Theorem \ref{reduction theorem}).
As an application of Theorem \ref{main2}, we deduce Rickard's star theorem (Corollary \ref{Rickard reduction theorem}).


This paper is organized as follows:
In Section \ref{section of SSBA}, 
we recall the definition of special biserial algebras and introduce the notion of SB quivers. 
It is seen that the class of symmetric special biserial algebras coincides with 
that of SB quivers (Lemma \ref{cycle decomposition}),
which plays an important role in this paper.
%
Moreover, we study tilting complexes introduced by Okuyama and Rickard. 
In Section \ref{section of mutation},
mutation of SB quivers is introduced.
Theorem \ref{main1} is also stated.
In Section \ref{section of reduction},
applying Theorem \ref{main1}, we establish a method for reducing some cycles (Method \ref{reduction method}).
It is observed that this method implies `reduction' theorem for symmetric special biserial algebras.
In Section \ref{Brauer graph},
we introduce flip of Brauer graphs and 
see that it is compatible with tilting mutation of Brauer graph algebras (Theorem \ref{main Brauer graph}).
Moreover, Theorem \ref{main2} is also obtained.
In Section \ref{proof},
we shall prove Theorem \ref{main1}.

\begin{acknowledgement}
The author would like to express his deep gratitude to Osamu Iyama
who read the paper carefully and gave a lot of helpful comments and suggestions.
The author would also like to thank the referees for carefully reading the paper and for giving constructive comments.
\end{acknowledgement}


\section{Symmetric special biserial algebras}\label{section of SSBA}

This section is devoted to introducing the notion of SB quivers.
We will give a relationship between symmetric special biserial algebras and SB quivers.
Moreover we study tilting mutation,
which is a special case of silting mutation introduced by \cite{AI}. 

Throughout this paper, we use the following notation.

\begin{notation}
Let $A$ be a finite dimensional algebra over an algebraically closed field $k$. 
\begin{enumerate}[(1)]
\item We always assume that $A$ is basic and indecomposable.
\item We often write $A=kQ/I$ where $Q$ is a finite quiver with relations $I$. 
The sets of vertices and arrows of $Q$ are denoted by $Q_0$ and $Q_1$, respectively.
\item We denote by $\mod A$ the category of finitely generated right $A$-modules. 
A simple (respectively, indecomposable projective) $A$-module corresponding to a vertex $i$ of $Q$
is denoted by $S_i$ (respectively, by $P_i$). 
We always mean that a module is finitely generated. 
\end{enumerate}
\end{notation}

A quiver of the form 
$\xymatrix{
\bullet \ar[r] & \bullet \ar[r] & \cdots \ar[r] & \bullet \ar@/_1pc/[lll]_{}
}$
with $n$ arrows is called an $n$-\emph{cycle} (for simplicity, \emph{cycle}).
We mean 1-cycle by \emph{loop}. 

Let us start with introducing SB quivers.

\begin{definition}
We say that a finite connected quiver $Q$ is \emph{special} if any vertex $i$ of $Q$ is 
the starting point of at most two arrows and also the end point of at most two arrows.
For a special quiver $Q$ with at least one arrow, a set $\C=\{C_1, C_2,\cdots,C_v\}$ of cycles in $Q$
with a function $\mult:\C\to\mathbb{N}$ is said to be
a \emph{cycle-decomposition} if it satisfies the following conditions:
\begin{enumerate}[(1)]
\item Each $C_\ell$ is a subquiver of $Q$ with at least one arrow such that 
$Q_0=(C_1)_0\cup\cdots\cup (C_v)_0$ and $Q_1=(C_1)_1\amalg\cdots\amalg (C_v)_1$:
For any $\alpha\in Q_1$, we denote by $C_\alpha$ a unique cycle in $\C$ which contains $\alpha$.
\item Any vertex of $Q$ belongs to at most two cycles.
\item $\mult(C_\ell)>1$ if $C_\ell$ is a loop.
\end{enumerate} 
A \emph{SB quiver} is a pair $(Q,\C)$ of a special quiver $Q$ and its cycle-decomposition $\C$.
\end{definition}

Let $(Q,\C)$ be a SB quiver.
For each cycle $C$ in $\C$,
we call $\mult(C)$ the \emph{multiplicity} of $C$.
For any arrow $\alpha$ of $Q$, we denote by $\na(\alpha)$ a unique arrow $\beta$ such that
$\xymatrix{\bullet \ar[r]^\alpha & \bullet \ar[r]^\beta & \bullet}$ appears in $C_\alpha$.

We construct a finite dimensional algebra from a SB quiver.

\begin{definition}
Let $(Q,\C)$ be a SB quiver. 
An ideal $I_{(Q,\C)}$ of $kQ$ is generated by the following three kinds of elements:
\begin{enumerate}[(i)]
\item $(\alpha_t\alpha_{t+1}\cdots\alpha_{t+s-1})^m\alpha_t$ for each cycle $C$ in $\C$ of the form 
\[\xymatrix{i_1 \ar[r]^{\alpha_1} & i_2 \ar[r]^{\alpha_2} & \cdots \ar[r]^{\alpha_{s-1}} & i_s \ar@/^1pc/[lll]^{\alpha_s}}\]
and $t=1,2,\cdots,s$,
where $m=\mult(C)$ and the indices are considered in modulo $s$.
\item $\alpha\beta$ if $\beta\neq\na(\alpha)$.
\item $(\alpha_1\alpha_2\cdots\alpha_s)^m-(\beta_1\beta_2\cdots\beta_t)^{m'}$ 
whenever we have a diagram
\[\xymatrix@R=1ex{
\cdots \ar[r] & i_s \ar[rd]^{\alpha_s} & & i'_t \ar[ld]_{\beta_t} & \cdots \ar[l] \\
&& i \ar[ld]^{\alpha_1} \ar[rd]_{\beta_1} && \\
\cdots & i_2 \ar[l]_{\alpha_2} & & i'_2 \ar[r]^{\beta_2} & \cdots
}\]
where $C_{\alpha_\ell}=C_{\alpha_1}, C_{\beta_{\ell'}}=C_{\beta_1}$ for any $1\leq\ell\leq s,1\leq \ell'\leq t$
and $m=\mult(C_{\alpha_1}), m'=\mult(C_{\beta_1})$.
\end{enumerate}
We define a $k$-algebra $A:=A_{(Q,\C)}$ associated with $(Q,\C)$ by $A=kQ/I_{(Q,\C)}$. 
Then the algebra $A_{(Q,\C)}$ is finite dimensional and symmetric.
The cycle-decomposition $\C$ is also said to be the \emph{cycle-decomposition} of $A_{(Q,\C)}$. 
\end{definition}

An algebra $A:=kQ/I$ is said to be \emph{special biserial} if $Q$ is special and 
for any arrow $\beta$ of $Q$, there is at most one arrow $\alpha$ with $\alpha\beta\not\in I$
and at most one arrow $\gamma$ with $\beta\gamma\not\in I$.

Thanks to \cite{Ro,An} (see Proposition \ref{SSBA is BGA}), we have the following result.

\begin{lemma}\label{cycle decomposition}
The assignment $(Q,\C)\mapsto A_{(Q,\C)}$ gives rise to a bijection between 
the isoclasses of SB quivers and those of symmetric special biserial algebras. 
\end{lemma}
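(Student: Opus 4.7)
The plan is to establish the bijection by constructing the two directions explicitly and checking they are mutually inverse. Along the way I would appeal to the description of symmetric special biserial algebras through Brauer graphs (Proposition \ref{SSBA is BGA}) for the non-trivial surjectivity claim.

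First I would check that the assignment is well defined, i.e.\ that $A_{(Q,\C)}$ is a symmetric special biserial algebra. The quiver $Q$ is special by the definition of an SB quiver. For any arrow $\alpha$, relations of type (ii) force $\alpha\beta\in I_{(Q,\C)}$ for every $\beta\neq\na(\alpha)$, so at most one arrow $\beta$ satisfies $\alpha\beta\notin I_{(Q,\C)}$; by condition (2) in the definition of a cycle-decomposition the same holds from the other side. The relations (i) and (iii) identify, for each vertex $i$, a one-dimensional socle in $P_i$ which is isomorphic to $S_i$, and they make the different maximal non-zero paths ending at $i$ agree; this produces a symmetrizing bilinear form on $A_{(Q,\C)}$, hence the algebra is symmetric. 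Finite-dimensionality follows because the relations of type (i) bound the length of any non-zero path.

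Next I would construct an inverse map. Starting from a symmetric special biserial algebra $A=kQ/I$, the special condition on $Q$ is given. For each arrow $\alpha$ of $Q$ there is at most one arrow $\beta$ with $\alpha\beta\notin I$; the crucial point is that symmetry forces such a $\beta$ to exist. Indeed, in a symmetric algebra the socle of $P_{s(\alpha)}$ equals its top, so no non-zero path can terminate at an arrow without a successor — otherwise the corresponding projective would have two non-isomorphic composition factors in its socle and top, contradicting the Nakayama permutation being the identity. Thus $\na(\alpha)$ is well defined, and iterating produces a periodic sequence $\alpha,\na(\alpha),\na^2(\alpha),\dots$ whose underlying subquiver $C_\alpha$ is a cycle. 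Collecting all such cycles yields $\C$, with $\mult(C)$ read off from the unique $m$ making $(\alpha_1\cdots\alpha_s)^m$ a socle element. Property (2) in the definition of an SB quiver translates the special condition on $Q$, and property (3) (loops have multiplicity $>1$) follows because a loop $\alpha$ with $\alpha^2=0$ would produce a $2$-dimensional projective that cannot be symmetric unless its socle is $S_i$, forcing the period to be at least $2$.

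Finally I would verify the two constructions are inverse. Going from $(Q,\C)$ to $A_{(Q,\C)}$ and back, the function $\na$ defined from the algebra coincides with the one given by $\C$ by construction, so the extracted cycles recover $\C$ and the multiplicities match because of relation (i). In the other direction, given $A=kQ/I$, the algebra built from the extracted $(Q,\C)$ has exactly the same generators and the same generating relations: the relations of type (i) are the socle relations of $A$, type (ii) expresses the special biserial property, and type (iii) expresses the equality of the two non-zero socle paths at a vertex lying on two cycles, which holds in any symmetric special biserial algebra.

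The main obstacle is the existence of $\na(\alpha)$ for every arrow, i.e.\ proving that non-zero paths always extend to full cycles rather than terminating, and the correct bookkeeping of the multiplicity of loops and bi-serial vertices. I expect to handle this by invoking the Nakayama permutation of a symmetric algebra, or equivalently by using the Brauer graph description of Proposition \ref{SSBA is BGA}: once the Brauer graph is available, the cycles of $\C$ correspond to the cyclic orderings around the vertices of the graph, and the multiplicities correspond to the vertex multiplicities, making both directions of the bijection transparent.
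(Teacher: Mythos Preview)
Your proposal is correct and aligns with the paper's approach: the paper does not give an independent argument but simply invokes the Brauer-graph characterization of symmetric special biserial algebras due to Roggenkamp and Antipov (see Proposition~\ref{SSBA is BGA}), which is precisely the result you appeal to for the surjectivity direction. Your sketch in fact supplies considerably more detail than the paper, which offers nothing beyond the citation.
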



\begin{example}\label{examples of ssb}
\begin{enumerate}[(1)]
\item Let $Q$ be the quiver 
\[\xymatrix{
&1 \ar@<0pc>[dl]^\alpha \ar@<0.4pc>[dr]^{\gamma'}& \\
2 \ar@<0.4pc>[ur]^{\alpha'} \ar@<0.2pc>[rr]^\beta & & 3 \ar@<0pc>[ul]^\gamma \ar@<0.2pc>[ll]^{\beta'}
}\]
with the relations $I:=\langle\alpha\beta, \beta\gamma, \gamma\alpha, \alpha'\gamma', \gamma'\beta',\beta'\alpha',
\alpha'\alpha-\beta\beta', \beta'\beta-\gamma\gamma', \gamma'\gamma-\alpha\alpha'\rangle$.
Then the algebra $A:=kQ/I$ is symmetric special biserial associated with 
the SB quiver $(Q,\C)$
where the cycle-decomposition is 
\[\C=\left\{
\left(\xymatrix{1 \ar@<0.2pc>[r]^{\alpha} & 2\ar@<0.2pc>[l]^{\alpha'}}\right),
\left(\xymatrix{2 \ar@<0.2pc>[r]^{\beta} & 3\ar@<0.2pc>[l]^{\beta'}}\right), 
\left(\xymatrix{3 \ar@<0.2pc>[r]^{\gamma} & 1\ar@<0.2pc>[l]^{\gamma'}}\right)
\right\}
\]
such that the multiplicity of every cycle is 1. 

\item Let $Q$ be the quiver 
\[\xymatrix{
& 1 \ar@(ru,ul)_\delta \ar[dl]_\alpha & \\
2 \ar[rr]_\beta && 3 \ar[lu]_\gamma 
}\]
with the relations $I:=\langle\gamma\alpha, (abcd)^2a\ |\ \{a,b,c,d\}=\{\alpha,\beta,\gamma,\delta\} \rangle$. 
Then $A:=kQ/I$ is a symmetric special biserial algebra which is isomorphic to $A_{(Q,\C)}$,
where $\C$ is the cycle-decomposition 
\[\C=\left\{\left(\begin{array}{c}\xymatrix{
1 \ar[r]^\alpha & 2 \ar[d]^\beta \\
1 \ar[u]^\delta & 3 \ar[l]^\gamma
}\end{array}\right)\right\}\]
with the multiplicity 2.

\item Let $Q$ be the quiver 
\[\xymatrix@C=1.5cm{
& 1 \ar@<-0.2pc>[d]_\alpha \ar@<0.2pc>[d]^{\alpha'}  & \\
3 \ar[ru]^\gamma & 2 \ar[l]^\beta \ar[r]_{\beta'} & 4 \ar[ul]_{\gamma'} 
}\]
with cycle-decomposition $\C=\{C_1, C_2\}$
where 
\[C_1=\left(\begin{array}{c}
\xymatrix@R=0.5cm{
 & 1 \ar[d]^\alpha \\
3 \ar[ur]^\gamma & 2 \ar[l]^\beta
}
\end{array}\right),
C_2=\left(\begin{array}{c}
\xymatrix@R=0.5cm{
1 \ar[d]_{\alpha'} \\
2 \ar[r]_{\beta'} & 4 \ar[ul]_{\gamma'} 
}
\end{array}\right)
\]
and $\mult(C_1)=\mult(C_2)=1$.
Then we have an isomorphism $A_{(Q,\C)}\simeq kQ/I$ 
where $I=\langle \alpha\beta', \alpha'\beta, \gamma\alpha', \gamma'\alpha, (abc)a\ |\ 
\{a,b,c\}=\{\alpha,\beta,\gamma\}, \{\alpha',\beta', \gamma'\} \rangle$.
\end{enumerate}
\end{example}

We know that the property of being symmetric special biserial is derived invariant. 

\begin{proposition}\label{ssb under derived}
Let $A$ and $B$ be finite dimensional algebras. 
Suppose that $A$ and $B$ are derived equivalent. 
If $A$ is a symmetric special biserial algebra,
then so is $B$.
\end{proposition}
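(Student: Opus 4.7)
The plan is to split the statement into two assertions: $B$ is symmetric, and $B$ is special biserial. The first is a classical result of Rickard: being symmetric is preserved under derived equivalence, because the Nakayama bimodule condition $D(A)\simeq A$ is equivalent to saying that the Serre functor on $\Db(\mod A)$ is just the shift, a property that is transported by any derived equivalence.

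For the special biserial part, the plan is to pass from derived equivalence to stable equivalence. By a further theorem of Rickard, a derived equivalence between two symmetric algebras induces a stable equivalence of Morita type. Thus it suffices to show: if symmetric algebras $A$ and $B$ are stably equivalent of Morita type and $A$ is special biserial, then so is $B$. This is a theorem of Pogorza\l y. The underlying idea is that the special biserial property of a symmetric algebra admits a description in terms of the uniserial decomposition of $\operatorname{rad}(P)/\operatorname{soc}(P)$ for indecomposable projectives, and this decomposition can be recovered from the shape of almost split sequences in the stable category, which is preserved under stable equivalence of Morita type.

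The main obstacle is the second step. The definition of special biserial is given in terms of a quiver-with-relations presentation, which is not itself a derived invariant, so one cannot read the property off directly from $\Db(\mod B)$. The nontrivial work is to find an intrinsic, module-theoretic characterization that transports across a stable equivalence; I would avoid reproducing Pogorza\l y's combinatorial argument and instead cite his result to close the proof. If one insisted on a fully self-contained proof, the natural substitute would be a detection criterion formulated on the Auslander--Reiten quiver (for instance via the tubular structure of the stable AR quiver characteristic of special biserial symmetric algebras), but this appears considerably more work than appealing to the known stable-invariance statement.
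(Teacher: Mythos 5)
Your proof is correct and is essentially the paper's own argument: the paper's proof reads ``Combine \cite{R1} and \cite{P}'', i.e.\ use Rickard's results to get symmetry and a stable equivalence, then invoke Pogorza\l y's theorem that special biseriality of self-injective algebras is a stable invariant. Your write-up just makes the same two-step reduction explicit.
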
 
\begin{proof}
Combine \cite{R1} and \cite{P}.
\end{proof}


Next, we recall the notion of tilting mutation.
We refer to \cite{AI} for details.

The bounded derived category of $\mod A$ is denoted by $\Db(\mod A)$. 

We give the definition of tilting complexes. 

\begin{definition}
Let $A$ be a finite dimensional algebra. 
We say that a bounded complex $T$ of finitely generated projective $A$-modules is \emph{tilting} 
if it satisfies $\Hom_{\Db(\mod A)}(T,T[n])=0$ for any integer $n\neq0$ and 
produces the complex $A$ concerned in degree 0 by taking direct summands, mapping cones and shifts. 
\end{definition}

The following result shows the importance of tilting complexes.

\begin{theorem}\cite{R1}
Let $A$ and $B$ be finite dimensional algebras.
Then $A$ and $B$ are derived equivalent if and only if 
there exists a tilting complex $T$ of $A$ such that $B$ is Morita equivalent to 
the endomorphism algebra $\End_{\Db(\mod A)}(T)$.
\end{theorem}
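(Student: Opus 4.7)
The strategy splits into two directions. The ``only if'' direction is essentially formal: given a triangle equivalence $F\colon \Db(\mod B) \to \Db(\mod A)$, set $T := F(B)$. Since $B \in \Kb(\proj B)$ and triangle equivalences preserve the subcategory of compact objects (identified here with $\Kb(\proj -)$), $T$ lies in $\Kb(\proj A)$. The vanishing $\Hom_{\Db(\mod A)}(T, T[n]) \cong \Hom_{\Db(\mod B)}(B, B[n]) = 0$ for $n \neq 0$ is immediate from $F$ being fully faithful. The generation condition follows because $B$ generates $\Kb(\proj B)$ under direct summands, cones and shifts, and $F$ respects each of these operations. Finally, $\End_{\Db(\mod A)}(T) \cong \End_{\Db(\mod B)}(B) \cong B$.

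For the ``if'' direction, suppose $T$ is tilting with $B \cong \End_{\Db(\mod A)}(T)$. The goal is to build a triangle equivalence $\Db(\mod B) \to \Db(\mod A)$ sending $B$ to $T$. Rickard's original approach constructs a two-sided tilting complex $X$ of $B$-$A$-bimodules whose restriction to $A$ is quasi-isomorphic to $T$; the equivalence is then $- \otimes^L_B X$. A cleaner modern route uses dg enhancements: choose a dg model of $\Db(\mod A)$, replace $T$ by a cofibrant dg module with dg endomorphism algebra $\mathcal{E}$, and observe that the $\Ext$-vanishing built into the tilting hypothesis forces $\mathcal{E}$ to be quasi-isomorphic to its cohomology $B$. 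Keller's Morita theorem for dg algebras then identifies the thick triangulated subcategory generated by $T$ with $\Db(\mod B)$, and the generation axiom for $T$ shows this subcategory is all of $\Db(\mod A)$.

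The main obstacle is exactly this realization step: upgrading the abstract ring isomorphism $B \cong \End_{\Db(\mod A)}(T)$ to a coherent chain-level structure, either as a bimodule or as a formality statement for the dg endomorphism algebra. This requires obstruction-theoretic control, and the relevant obstruction classes vanish precisely because of the higher $\Ext$-vanishing in the tilting axiom. Since the author invokes \cite{R1} directly rather than reproducing Rickard's machinery, I would present only the formal ``only if'' direction in detail and defer the bimodule construction to the original source.
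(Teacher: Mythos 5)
The paper gives no proof of this statement: it is imported verbatim from Rickard's paper \cite{R1}, so there is nothing internal to compare your argument against. Your sketch is a correct account of the standard proof — the ``only if'' direction is indeed formal exactly as you describe (take $T=F(B)$ and transport the vanishing and generation properties along the equivalence), and you correctly identify the genuine content of the ``if'' direction as the realization problem of promoting the ring isomorphism $B\cong\End_{\Db(\mod A)}(T)$ to a two-sided tilting complex (or, equivalently, a formality/lifting statement for the dg endomorphism algebra), which is precisely what is deferred to \cite{R1}. Deferring that step is consistent with how the paper itself treats the theorem.
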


For each vertex $i$ of $Q$, we denote by $e_i$ the corresponding primitive idempotent of $A$. 

We recall a complex given by Okuyama and Rickard \cite{O, R2},
which is a special case of tilting mutation (see \cite{AI}). 

\begin{definitiontheorem}\label{OR complex}\cite{O}
Fix a vertex $i$ of $Q$. 
We define a complex by 
\[T_{j}:=\begin{cases}
\begin{array}{c}
\xymatrix@!R=0.5pt{
 (0{\rm{th}}) & (1{\rm{st}}) & \\
 P_{j} \ar[r] & 0 & (j\neq i)  \\
 P \ar[r]^{\pi_{i}} & P_{i} & (j=i)
}
\end{array}
\end{cases}
\]
where $P\xrightarrow{\pi_{i}}P_{i}$ is a minimal projective presentation of $e_{i}A/e_{i}A(1-e_i)A$. 
Now we call $T(i):=\bigoplus_{j\in Q_0}T_j$ an \emph{Okuyama-Rickard complex} with respect to $i$  
and put $\mu_i^+(A):=\End_{\Db(\mod A)}(T(i))$. 
If $A$ is symmetric, then $T$ is tilting. 
In particular, $\mu_i^+(A)$ is derived equivalent to $A$.
\end{definitiontheorem}

\section{Mutation of SB quivers}\label{section of mutation}

The aim of this paper is to give a purely combinatorial description of tilting mutation of symmetric special biserial algebras.

To do this, we introduce \emph{mutation of SB quivers} by dividing to three cases,
which is a new SB quiver $\mu_i^+(Q,\C)$ made from a given one $(Q,\C)$.

Now, the main theorem in this paper is stated, which gives the compatibility 
between tilting mutation and mutation of SB quivers.
This is proved in Section \ref{proof}.

\begin{theorem}\label{compatibility}
Let $A$ be a symmetric special biserial algebra and take a SB quiver $(Q,\C)$ satisfying $A\simeq A_{(Q,\C)}$.
Let $i$ be a vertex of $Q$.
Then we have an isomorphism $A_{\mu_i^+(Q,\C)}\simeq \mu_i^+(A)$.
In particular, $A_{\mu_i^+(Q,\C)}$ is derived equivalent to $A$.
\end{theorem}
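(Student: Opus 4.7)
The plan is to avoid a direct computation of $\End_{\Db(\mod A)}(T(i))$ by leveraging Proposition~\ref{ssb under derived}: since $T(i)$ is tilting, $\mu_i^+(A)$ is derived equivalent to $A$, hence is again symmetric special biserial. By Lemma~\ref{cycle decomposition}, $\mu_i^+(A) \simeq A_{(Q',\C')}$ for a unique SB quiver $(Q',\C')$, and the theorem reduces to proving $(Q',\C') \cong \mu_i^+(Q,\C)$ as SB quivers.

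To identify $(Q',\C')$ without writing out the multiplication table of the endomorphism algebra, I would extract enough numerical invariants of $\mu_i^+(A)$ from the tilting complex $T(i) = \bigoplus_j T_j$ to pin down its SB quiver. The key ingredients are (a) the Cartan data $\dim_k \Hom_{\Db(\mod A)}(T_j, T_k)$, which for $j,k \neq i$ reduces immediately to entries of the Cartan matrix of $A$ since those $T_j$ are stalk complexes, and (b) the arrow multiplicities in $Q'$, obtainable from $\dim_k e_k \operatorname{rad}(\mu_i^+(A))/e_k \operatorname{rad}^2(\mu_i^+(A)) e_j$ by counting maps between the $T_j$ modulo those factoring through intermediate summands. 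The only nontrivial cases involve the two-term complex $T_i = (P \xrightarrow{\pi_i} P_i)$, but the structure of $P_i$ (and hence of $P$ and $\pi_i$) is entirely dictated by the cycles of $\C$ through $i$ together with their multiplicities, so all relevant hom spaces can be written down explicitly.

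Once the underlying quiver and enough Cartan data of $\mu_i^+(A)$ are known, $(Q',\C')$ is determined: the special quiver $Q'$ is read off directly, and the cycle-decomposition together with the multiplicity function is then forced by comparing dimensions of indecomposable projectives against the formula for $\dim_k e_k A_{(Q',\C')} e_j$ coming from the cycle data. Matching these against what $\mu_i^+(Q,\C)$ produces by definition, and invoking Lemma~\ref{cycle decomposition}, upgrades the equality of SB quivers to the desired algebra isomorphism.

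The proof naturally splits along the three cases in Definitions~\ref{QM}, \ref{QM multiplex}, and \ref{QM multiplex 2}, reflecting whether $i$ lies on one or two cycles and whether a loop is involved. The main obstacle will be the case-by-case combinatorial bookkeeping: under the Okuyama-Rickard operation, old cycles through $i$ are cut and reglued into new cycles, arrows incident to $i$ must be replaced by the correct composites in $Q'$, and multiplicities transform nontrivially. In particular, verifying condition (3) on loop multiplicities being $>1$ after mutation, and handling short cycles where different summands of $T(i)$ interact delicately, are the places where the combinatorics of $\mu_i^+(Q,\C)$ must be matched most carefully with the representation-theoretic computation.
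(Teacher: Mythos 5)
Your overall strategy is the paper's: use Proposition~\ref{ssb under derived} to know in advance that $\mu_i^+(A)$ is symmetric special biserial, hence equal to $A_{(Q',\C')}$ for some SB quiver by Lemma~\ref{cycle decomposition}, and then identify $(Q',\C')$ by numerical invariants rather than by writing out the endomorphism algebra, splitting into the three mutation cases. Your treatment of the quiver (via $\operatorname{rad}/\operatorname{rad}^2$ of the endomorphism algebra) is workable, though the paper gets there more cleanly by computing $\dim\Ext^1_B(S_j,S_k)$ through the stable equivalence $F_i$ and Okuyama's description of the simple $B$-modules (Proposition~\ref{simples}), which turns everything into $\Ext^1$ and stable Hom computations over $A$ itself.

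The genuine gap is your claim that, once $Q'$ and the Cartan data are known, ``the cycle-decomposition together with the multiplicity function is then forced.'' This is false in general: the underlying special quiver together with the Cartan matrix does not determine the cycle-decomposition. For instance, on the quiver with two vertices $1,2$ and arrows $a,c:1\to 2$, $b,d:2\to 1$, the decomposition into the two $2$-cycles $\{a,b\},\{c,d\}$ with multiplicities $(1,3)$ and the one with multiplicities $(2,2)$ both yield the constant Cartan matrix with every entry equal to $4$, yet the algebras are not isomorphic. So dimension counts alone cannot recover the pairing $\na$ of incoming with outgoing arrows at a vertex lying on two cycles. The paper closes exactly this hole in its second step: for each arrow of $\mu_i^+(Q,\C)$ it exhibits an explicit chain map between the relevant summands $T_j$ of $T(i)$ and checks by hand which compositions are nonzero in $\End_{\Db(\mod A)}(T(i))$; this determines which arrows concatenate into which cycles. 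Only after the cycles are pinned down does it invoke Proposition~\ref{Cartan} (together with the observation that $\dim\Hom_{\Db(\mod A)}(T_h,T_j)=\dim\Hom_A(P_h,P_j)$ for $h,j\neq i$, so only the column of $T_i$ needs the alternating-sum formula) to fix the remaining multiplicities. You would need to add an analogue of that explicit-morphism step, or some other argument isolating the $\na$ function, for your reduction to go through.
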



Let $(Q,\C)$ be a SB quiver and $i$ be a vertex of $Q$.
We say that $Q$ is \emph{multiplex} at $i$ if there exists arrows $\xymatrix{i \ar@<0.2pc>[r]^\alpha & j \ar@<0.2pc>[l]^\beta}$
with $\beta\neq\na(\alpha)$ and $\alpha\neq\na(\beta)$.

\subsection{Non-multiplex case}

We introduce mutation of SB quivers at non-multiplex vertices. 

Let $(Q,\C)$ be a SB quiver and fix a vertex $i$ of $Q$.
We define a new SB quiver $\mu_i^+(Q,\C)=(Q',\C')$ as follows.

\subsubsection{Mutation rules}

\begin{definition}\label{QM}
Suppose that $Q$ is non-multiplex at $i$. 
We define a quiver $Q'$ as the following three steps:
\begin{enumerate}[(QM1)]
\item Consider any path 
\[\xymatrix{
h \ar[r]^\alpha & i \ar[r]^(0.25)\beta & j\ \mbox{with } \beta=\na(\alpha) & \mbox{or} & 
h \ar[r]^\alpha & i \ar@(lu,ru)^\gamma \ar[r]^(0.18)\beta & j\ \mbox{ with } \gamma=\na(\alpha), \beta=\na(\gamma)
}\]
for $h\neq i\neq j$.
Then draw a new arrow $\xymatrix{h \ar[r]^x & j}$ 
\begin{enumerate}[(QM1-1)]
\item if $h\neq j$ or
\item[(QM1-2)] if $h=j, \alpha=\na(\beta)\ \mbox{and } \mult(C_\alpha)>1$.
\end{enumerate}
\item Remove all arrows $\xymatrix{h \ar[r] & i}$ for $h\neq i$.
\item Consider any arrow $\xymatrix{i \ar[r]^\alpha & h}$ for $h\neq i$. 
\begin{enumerate}[(QM3-1)]
\item If there exists a path $\xymatrix{i \ar[r]^\alpha & h \ar[r]^\beta & j}$ with $\beta\neq\na(\alpha)$,
then replace it by a new path $\xymatrix{h \ar[r]^x & i \ar[r]^y & j}$.
\item Otherwise, add a new arrow $\xymatrix{h \ar[r]^x & i}$. 
\end{enumerate}
\end{enumerate}
\end{definition}

It is easy to see that the new quiver $Q'$ is again special. 

\subsubsection{Cycle-decompositions}

We give a cycle-decomposition $\C'$ of $Q'$.

\begin{definition}\label{relations of QM}
We use the notation of Definition \ref{QM}.
\begin{enumerate}[(1)]
\item We define a cycle containing a new arrow $x$ in (QM1) as follows: 
\begin{enumerate}[(i)]
\item In the case (QM1-1), $C_x$ is obtained by replacing $\alpha\beta$ or $\alpha\gamma\beta$ in $C_\alpha$ by $x$.
\item In the case (QM1-2), $C_x$ is a new cycle $\xymatrix{h\ar@(ur,dr)^x}$ with multiplicity $\mult(C_\alpha)$.
\end{enumerate}
\item We define a cycle containing a new arrow $x$ and $y$ in (QM3) as follows:
\begin{enumerate}[(i)]
\item In the case (QM3-1), $C_x=C_y$ and replace $\beta$ in $C_\beta$ by $xy$.
\item In the case (QM3-2), 
\begin{enumerate}[(a)]
\item if there exists an arrow $\xymatrix{h \ar[r]^\beta & i}$ of $Q$, 
then $C_x$ is defined by replacing $\beta$ in $C_\beta$ by $x$. 
\item Otherwise, $C_x$ is a new cycle 
\[\begin{cases}
\ \xymatrix{i \ar@<0.2pc>[r]^\alpha & h \ar@<0.2pc>[l]^x} & \mbox{if there is no loop at $i$ belonging to } C_\alpha \\ 
\ \begin{array}{c}\xymatrix@C=0.5cm @R=0.5cm{ & i \ar[dr]^\alpha & \\ i \ar[ur]^\beta &  & h \ar[ll]^x }\end{array} &
\mbox{if there is a loop $\beta$ at $i$ belonging to } C_\alpha
\end{cases}\]
with multiplicity 1.
\end{enumerate}
\end{enumerate}
\end{enumerate}
Then we obtain a cycle-decomposition $\C'$ of $Q'$.
\end{definition}

Thus, we get a new SB quiver $\mu_i^+(Q,\C):=(Q',\C')$, called \emph{right mutation} of $(Q,\C)$ at $i$.

Dually, we define the \emph{left mutation} $\mu_i^-(Q,\C)$ of $(Q,\C)$ at $i$ 
by $\mu_i^-(Q,\C):=\mu_i^+(Q^{\rm op},\C^{\rm op})^{\rm op}$, 
where $Q^{\rm op}$ is the opposite quiver of $Q$ and $\C^{\rm op}$ is the cycle-decomposition of $Q^{\rm op}$ corresponding to $\C$.

\begin{example}\label{examples of QM}
\begin{enumerate}[(1)]
\item Let $(Q,\C)$ be the SB quiver as in Example \ref{examples of ssb} (1).
Then we have the right mutation $\mu_1^+(Q,\C)=(Q',\C')$ of $Q$ at 1 as follows: 
\[\begin{CD}
Q=\fbox{$\begin{array}{c}
\xymatrix{
&1 \ar@<0pc>[dl] \ar@<0.4pc>[dr]& \\
2 \ar@<0.4pc>[ur] \ar@<0.2pc>[rr] & & 3 \ar@<0pc>[ul] \ar@<0.2pc>[ll]
}
\end{array}$} 
@>\mbox{(QM1)}>> 
\fbox{$\begin{array}{c}
\xymatrix{
&1 \ar@<0pc>[dl] \ar@<0.4pc>[dr]& \\
2 \ar@<0.4pc>[ur] \ar@<0.2pc>[rr] & & 3 \ar@<0pc>[ul] \ar@<0.2pc>[ll]
}
\end{array}$} \\
@. @VV\mbox{(QM2)}V \\
Q'=\fbox{$\begin{array}{c}
\xymatrix{
2 \ar@<0.2pc>[r] & 1 \ar@<0.2pc>[r] \ar@<0.2pc>[l] & 3 \ar@<0.2pc>[l] 
}
\end{array}$}
@<<\mbox{(QM3)}< 
\fbox{$\begin{array}{c}
\xymatrix{
&1 \ar@<0pc>[dl] \ar@<0.4pc>[dr]& \\
2 \ar@<0.2pc>[rr] & & 3 \ar@<0.2pc>[ll]
}
\end{array}$}
\end{CD}\]
and 
\[\C'=\left\{
\left(\begin{array}{c}
\xymatrix{
1 \ar[r] & 2 \ar[d] \\
3 \ar[u] & 1 \ar[l]
}
\end{array}\right)
\right\}
\]

\item Let $(Q,\C)$ be the SB quiver of Example \ref{examples of ssb} (2).
Then the right mutation $\mu_1^+(Q,\C)=(Q',\C')$ of $Q$ at 1 is obtained as follows:
\[\begin{CD}
Q=\fbox{$\begin{array}{c}
\\
\xymatrix{
& 1 \ar@(ru,ul) \ar[dl] & \\
2 \ar[rr] && 3 \ar[lu]
} 
\end{array}$} @>\mbox{(QM1)}>> 
\fbox{$\begin{array}{c}
\\
\xymatrix{
& 1 \ar@(ru,ul) \ar[dl] & \\
2 \ar@<0.2pc>[rr] && 3 \ar[lu] \ar@<0.2pc>[ll]
}
\end{array}$} \\
@. @VV\mbox{(QM2)}V \\
Q'=\fbox{$\begin{array}{c}
\\
\xymatrix{
& 1 \ar@(lu,ur) \ar@<0.2pc>[dl] & \\
2 \ar@<0.2pc>[rr] \ar@<0.2pc>[ur] && 3 \ar@<0.2pc>[ll]
}
\end{array}$} @<<\mbox{(QM3)}<
\fbox{$\begin{array}{c}
\\
\xymatrix{
& 1 \ar@(ru,ul) \ar[dl] & \\
2 \ar@<0.2pc>[rr] && 3 \ar@<0.2pc>[ll]
}
\end{array}$}
\end{CD}\]
and 
\[\C'=\left\{
\left(\begin{array}{c}
\scalebox{0.8}{%
\xymatrix{
& 1 \ar[dr] & \\
1 \ar[ur] & & 2 \ar[ll]
}}
\end{array}\right),
\left(\begin{array}{c}
\xymatrix{
2 \ar@<0.2pc>[r] & 3 \ar@<0.2pc>[l]
}
\end{array}\right)\right\}\]
where the first and the second cycles have multiplicity 1 and 2, respectively.

\item Let $(Q,\C)$ be the SB quiver as in Example \ref{examples of ssb} (3).
Then we get the right mutation $\mu_1^+(Q,\C)=(Q',\C')$ of $Q$ at 1 as follows:
\[\begin{CD}
Q=\fbox{$\begin{array}{c}
\xymatrix@C=1.5cm{
& 1 \ar@<-0.2pc>[d] \ar@<0.2pc>[d]  & \\
3 \ar[ru] & 2 \ar[l] \ar[r] & 4 \ar[ul] 
}
\end{array}$} @>\mbox{(QM1)}>> 
\fbox{$\begin{array}{c}
\xymatrix@C=1.5cm{
& 1 \ar@<-0.2pc>[d] \ar@<0.2pc>[d]  & \\
3 \ar[ru] \ar@<-0.2pc>[r] & 2 \ar@<-0.2pc>[l] \ar@<0.2pc>[r] & 4 \ar[ul] \ar@<0.2pc>[l] 
}
\end{array}$} \\
@. @VV\mbox{(QM2)}V \\
Q'=\fbox{$\begin{array}{c}
\xymatrix@C=1.5cm{
& 1 \ar[dl] \ar[dr]& \\
3 \ar@<-0.2pc>[r] & 2 \ar@<-0.2pc>[u] \ar@<0.2pc>[u] & 4 \ar@<0.2pc>[l] 
}
\end{array}$} @<<\mbox{(QM3)}<
\fbox{$\begin{array}{c}
\xymatrix@C=1.5cm{
& 1 \ar@<-0.2pc>[d] \ar@<0.2pc>[d]  & \\
3 \ar@<-0.2pc>[r] & 2 \ar@<-0.2pc>[l] \ar@<0.2pc>[r] & 4 \ar@<0.2pc>[l] 
}
\end{array}$}
\end{CD}\]
and 
\[\C'=\left\{
\left(\begin{array}{c}
\xymatrix@R=0.5cm{
 & 1 \ar[dl] \\
3 \ar[r] & 2 \ar[u]
}
\end{array}\right),
\left(\begin{array}{c}
\xymatrix@R=0.5cm{
1 \ar[dr] \\
2 \ar[u] & 4 \ar[l] 
}
\end{array}\right)\right\}
\]
\end{enumerate}
\end{example}

\subsection{Multiplex case (1)}

Next, we introduce mutation at multiplex vertices and its cycle-decomposition. 
They are defined by making minor alterations to mutation at non-multiplex vertices.

Let $(Q,\C)$ be a SB quiver and fix a vertex $i$ of $Q$. 
We consider the following situation:
\[\xymatrix@R=1pt{
j' \ar[dr]^{\alpha'} & & \\
 & i \ar@<0.2pc>[r]^\alpha \ar[dl]^{\beta'} & j \ar@<0.2pc>[l]^\beta\\
h & &
}\]
with $\beta\neq\na(\alpha)$ and $\alpha\neq\na(\beta)$:
in this case, it is observed that $\alpha=\na(\alpha')$ and $\beta'=\na(\beta)$.

We define a new SB quiver $\mu_i^+(Q,\C)=(Q',\C')$ as follows.

\subsubsection{Mutation rules}

\begin{definition}\label{QM multiplex}
We assume that $j'\neq h$.
A quiver $Q'$ of $Q$ at $i$ is defined by the following three steps:
\begin{enumerate}[(QM1)']
\item Draw a new arrow $\xymatrix{j' \ar[r]^x & j}$
\begin{enumerate}[(QM1-1)']
\item if $j'\neq j$ or
\item[(QM1-2)'] if $j'=j$ and $\mult(C_\alpha)>1$.
\end{enumerate}
\item Remove two arrows $\alpha$ and $\alpha'$.
\item Add new arrows in the following way:
\begin{enumerate}[(QM3-1)']
\item If there is an arrow $\gamma:\xymatrix{h \ar[r] & h'}$ with $\gamma\neq \na(\beta')$, 
then remove $\gamma$ and add new arrows $\xymatrix{h \ar[r]^x & i \ar[r]^y & h'}$.
\item Otherwise, add new arrows $\xymatrix{h \ar@<0.2pc>[r]^x & i \ar@<0.2pc>[l]^y}$.
\end{enumerate}
\end{enumerate}
\end{definition}

We can easily check that the new quiver $Q'$ is again special.

\subsubsection{Cycle-decompositions}

We give a cycle-decomposition $\C'$ of $Q'$.

\begin{definition}\label{relations of QM multiplex}
Assume that $j'\neq h$.
We use the notation of Definition \ref{QM multiplex}.
\begin{enumerate}[(1)]
\item We define a cycle containing a new arrow $x$ in (QM1)' as follows: 
\begin{enumerate}[(i)]
\item In the case (QM1-1)', $C_x$ is obtained by replacing $\alpha'\alpha$ in $C_\alpha$ by $x$.
\item In the case (QM1-2)', $C_x$ is a new cycle $\xymatrix{j\ar@(ur,dr)^x}$ with multiplicity $\mult(C_\alpha)$.
\end{enumerate}
\item We define a cycle containing a new arrow $x$ and $y$ in (QM3)' as follows:
\begin{enumerate}[(i)]
\item In the case (QM3-1)', $C_x=C_y$ and replace $\gamma$ in $C_\gamma$ by $xy$.
\item In the case (QM3-2)', $C_x$ and $C_y$ are new cycles 
satisfying $C_x=C_y=\left(\xymatrix{h \ar@<0.2pc>[r]^x & i \ar@<0.2pc>[l]^y}\right)$
with multiplicity 1.
\end{enumerate}
\end{enumerate}
Then we have a cycle-decomposition $\C'$ of $Q'$.
\end{definition}

Thus, we get a new SB quiver $\mu_i^+(Q,\C)=(Q',\C')$, called \emph{right mutation} of $(Q,\C)$ at $i$.

Dually, we define the \emph{left mutation} $\mu_i^-(Q,\C)$ of $(Q,\C)$ at $i$ 
by $\mu_i^-(Q,\C):=\mu_i^+(Q^{\rm op},\C^{\rm op})^{\rm op}$.

\begin{example}\label{example of multiplex}
Let $Q$ be the quiver 
\[\xymatrix{
 & 1 \ar[dl]_{\alpha_1} \ar@<0pc>[rd]|{\beta} & \\
2 \ar[rr]_{\alpha_2} & & 3 \ar@<0.4pc>[lu]^{\alpha_3} \ar@<-0.4pc>[lu]_{\beta'}
}\]
with cycle-decomposition $\C$:
\[Q=\left(\begin{array}{c}
\scalebox{0.8}{%
\xymatrix{
 & 1 \ar[dl]_{\alpha_1} & \\
2 \ar[rr]_{\alpha_2} & & 3 \ar[lu]_{\alpha_3}
}}
\end{array}\right)\cup 
\left(\begin{array}{c}
\scalebox{0.8}{%
\xymatrix{
1 \ar@<-0.2pc>[rd]_{\beta} & \\
 & 3  \ar@<-0.2pc>[lu]_{\beta'}
}}
\end{array}\right)
\]
such that the multiplicity of each cycle is 1.
Then we see that the right mutation $\mu_1^+(Q,\C)=(Q',\C')$ of $Q$ at 1 is 
\[\begin{CD}
Q:=\fbox{$\begin{array}{c}
\xymatrix{
 & 1 \ar[dl] \ar@<0pc>[rd] & \\
2 \ar[rr] & & 3 \ar@<0.4pc>[lu] \ar@<-0.4pc>[lu]
}
\end{array}$} @>\mbox{(QM1)'}>>
\fbox{$\begin{array}{c}
\xymatrix{
 & 1 \ar[dl] \ar@<0pc>[rd] & \\
2 \ar[rr] & & 3 \ar@<0.4pc>[lu] \ar@<-0.4pc>[lu]
}\end{array}$} \\
@. @VV\mbox{(QM2)'}V \\
Q'=\fbox{$\begin{array}{c}
\xymatrix{
 & 1 \ar@<0.4pc>[dl] \ar@<-0.4pc>[dl]  & \\
2 \ar[rr] \ar@<0pc>[ur] & & 3 \ar[lu]
}\end{array}$} @<<\mbox{(QM3)'}<
\fbox{$\begin{array}{c}
\xymatrix{
 & 1 \ar[dl]  & \\
2 \ar[rr] & & 3 \ar@<0.4pc>[lu] 
}\end{array}$}
\end{CD}\]
and
\[\C'=\left\{
\left(\begin{array}{c}
\scalebox{0.8}{%
\xymatrix{
 & 1 \ar[dl] & \\
2 \ar[rr] & & 3 \ar[lu]
}}
\end{array}\right), 
\left(\begin{array}{c}
\scalebox{0.8}{%
\xymatrix{
& 1 \ar@<0.2pc>[dl] \\
2 \ar@<0.2pc>[ur] &
}}
\end{array}\right)\right\}\]
\end{example}

\subsection{Multiplex case (2)}\label{triple multiplex}

Finally, we introduce the last case of mutation of SB quivers. 

Let $(Q,\C)$ be a SB quiver and fix a vertex $i$ of $Q$.
Suppose that the subquiver of $Q$ around the vertex $i$ is 
\[\xymatrix{
j' \ar@<0.2em>[r]^{\alpha'} & i \ar@<0.2em>[r]^\alpha \ar@<0.2em>[l]^{\beta'} & j \ar@<0.2em>[l]^{\beta}
}\]
with $\beta\neq\na(\alpha)$ and $\alpha\neq\na(\beta)$:
i.e., the case of $j'=h$ in Multiplex case (1).

\begin{definition}\label{QM multiplex 2}
We define the \emph{right mutation} $\mu_i^+(Q,\C)$ of $(Q,\C)$ at $i$ by $\mu_i^+(Q,\C)=(Q,\C)$.
\end{definition}

Dually, the \emph{left mutation} $\mu_i^-(Q,\C)$ of $(Q,\C)$ at $i$ is also defined by $\mu_i^-(Q,\C)=(Q,\C)$.


\section{Reduction theorem}\label{section of reduction}


The aim of this section is to give `reduction' theorem for symmetric special biserial algebras, 
which is a generalization of Rickard's star theorem (see Section \ref{star theorem}). 

Throughout this section, let $A$ be a symmetric special biserial algebra associated with a SB quiver $(Q,\C)$.

Note first that every $\mu_i^+$ preserves the property of being (non-) multiplex, that is, 
$A$ is non-multiplex at any vertex of $Q$ if and only if so is $\mu_i^+(A)$. 

The main theorem of this section is the following.

\begin{theorem}\label{reduction theorem}
Let $A$ be a symmetric special biserial algebra associated with 
a SB quiver $(Q, \{C_0, C_1,\cdots,C_s\})$ satisfying $\mult(C_0)\geq\mult(C_1)\geq\cdots\geq\mult(C_s)$.
Then the algebra $A$ is derived equivalent to a symmetric special biserial algebra 
with a cycle-decomposition $\{C'_0,C'_1,\cdots,C'_v\}$
satisfying the following properties:
\begin{enumerate}[{\rm (1)}]
\item All vertices appear in $C'_0$;
\item $v\leq1$ if $\mult(C_2)=1$. Otherwise, $v=\max\{\ell\in\{2,\cdots,s\}\ |\ \mult(C_\ell)\neq1 \}$;
\item $\mult(C_\ell')=\mult(C_\ell)$ for any $0\leq\ell\leq v$;
\item Each $C'_\ell$ for $\ell\geq 2$ is a loop.
\end{enumerate}
\end{theorem}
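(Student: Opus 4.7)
The plan is to induct on $s$, using Method \ref{reduction method} (which should first be established as a clean single-step operation on SB quivers) and translating each reduction step into a derived equivalence via Theorem \ref{compatibility}, so that the entire argument is combinatorial on the SB-quiver side. The base case $s=0$ is immediate, since then $(Q,\C)$ already has the desired form with $v=0$. For the inductive step, I would split the work into two phases: first eliminate cycles of multiplicity one (apart from possibly $C_1$), then convert the remaining auxiliary cycles into loops.

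In the first phase, suppose there exists a cycle $C_\ell$ with $\ell \geq 2$ and $\mult(C_\ell)=1$. Using connectivity of $Q$, choose a vertex $i$ lying on $C_\ell$ and on some neighbouring cycle. Applying mutations $\mu_i^{\pm}$ along $C_\ell$, case (QM3-1) of Definition \ref{QM} together with Definition \ref{relations of QM}(2)(i) lets the neighbouring cycle absorb an arrow of $C_\ell$ at each step, because $\beta$ in $C_\beta$ is replaced by the composite $xy$ while $C_\ell$ simultaneously loses an arrow. Iterating, $C_\ell$ is swallowed completely and its vertices now lie on the absorbing cycle. Performing this process so that vertices are always routed (possibly through intermediate absorptions) into $C_0$ yields conditions (1) and (2).

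In the second phase, every remaining cycle $C_\ell$ with $\ell \geq 2$ has $\mult(C_\ell) > 1$. Applying $\mu_i^+$ at a suitable vertex $i$ of $C_\ell$, case (QM1-2) of Definition \ref{QM} combined with Definition \ref{relations of QM}(1)(ii) shrinks $C_\ell$ by one arrow while preserving its multiplicity, producing after finitely many iterations a loop of the same multiplicity. Doing this for each such cycle gives condition (4). Condition (3) is then immediate, since the mutation rules record multiplicities unchanged in every case considered in Section \ref{section of mutation}.

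The main obstacle will be bookkeeping: one must verify that the absorbing mutations in the first phase do not destroy cycles already in target form, that the shrinking mutations in the second phase do not reintroduce multiplicity-one cycles, and that the vertices of absorbed cycles always end up on $C_0$ rather than on some other auxiliary that is itself later absorbed. The cleanest route is to formulate Method \ref{reduction method} so that each invocation only touches the targeted cycle and its immediate neighbours, with explicit bookkeeping of the other cycles as invariants; the multiplex cases of Definitions \ref{QM multiplex} and \ref{QM multiplex 2} have to be inspected separately to ensure that no anomaly arises at vertices shared by two arrows in opposite directions. Once Method \ref{reduction method} is available with these properties, the induction above is straightforward.
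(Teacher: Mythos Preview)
Your proposal follows the same underlying mechanism as the paper's proof---both reduce via iterated right mutations using the (QM1)/(QM3) rules---but the order of operations differs in a way worth noting. The paper first applies (R1) of Method \ref{reduction method} to arrange that \emph{every} vertex lies on $C_0$; only then does it reduce the remaining cycles one at a time via (R2), using (R3) to shuffle loops out of the way when necessary. Your two-phase scheme (first absorb all multiplicity-one cycles, then shrink the others to loops) is valid, but postponing the ``all vertices in $C_0$'' step forces exactly the bookkeeping you flag: you must track which neighbouring cycle receives each absorbed vertex and ensure everything eventually lands in $C_0$. The paper's (R1)-first ordering sidesteps this entirely, since once every vertex is on $C_0$, any other cycle automatically shares all its vertices with $C_0$, and each (R2) step cleanly feeds into $C_0$.

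One small correction: in your Phase~2 you cite (QM1-2) as the rule that ``shrinks $C_\ell$ by one arrow''. The shrinking-by-one step is (QM1-1) (the case $h\neq j$, handled by Definition~\ref{relations of QM}(1)(i)); (QM1-2) with Definition~\ref{relations of QM}(1)(ii) only applies at the very last step, when the cycle has length two and $\mult>1$, and it is what produces the loop. You will need both cases, applied in sequence.
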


To show this, we introduce a method for reducing some cycles.

\begin{method}\label{reduction method}
Let $(Q,\C)$ be a SB quiver and $C_0$ a cycle of $Q$.
We construct a new SB quiver $(Q',\C')$ as follows.
\begin{enumerate}[(R1)]
\item Let $i$ be a vertex of $Q$ which does not belong to $C_0$ and 
assume that there exist arrows $\xymatrix{i\ar[r]^\alpha&h\ar[r]^\beta&j}$ with $C_0=C_\beta$.
Then mutating at $i$, we obtain new arrows $\xymatrix{h \ar[r]^x & i \ar[r]^y & j}$ of $\mu_i^+(Q,\C)$ with $C_x=C_y$.
Thus, we observe that the vertex $i$ belongs to $C_x$:
Again, write $C_x$ by $C_0$.

Repeating this argument, we get a SB quiver $(Q',\C')$ having a cycle to which all vertices belong. 

\item Assume that all vertices are in $C_0$.
Fix a cycle $C:\xymatrix{i_1 \ar[r]^{\gamma_1} & i_2 \ar[r]^{\gamma_2} & \cdots \ar[r]^{\gamma_{\ell-1}} & 
i_\ell \ar@/^1pc/[lll]^{\gamma_\ell}}$ ($i_1\neq i_2$)
of $Q$ which is not $C_0$.
Let $\alpha:\xymatrix{i_1\ar[r]&h}$ be an arrow in $C_0$. 
Suppose that $P_h$ is uniserial or that 
there exists an arrow $\xymatrix{h \ar[r]^\beta & h'}$ with $C_\beta\neq C_0$ and $h\neq i_\ell$.
We consider the following situation
\[\xymatrix{
   & i_1 \ar[r]^\alpha \ar[d]^{\gamma_1} & h \ar[r]^{\alpha'} \ar@{-->}[d]^\beta & j \\
i_\ell \ar[ru]^{\gamma_\ell} & i_2  &    h' &
}\]
where $C_\alpha=C_{\alpha'}=C_0$.
Then mutating at $i_1$, we see that 
the vertex $i_1$ is in a new cycle $\xymatrix{i_1 \ar@<0.2pc>[r]^\alpha & h \ar@<0.2pc>[l]}$ or in $C_\beta$, 
and does not belong to $C_{\gamma_2}$.
Since the vertex $i_2$ is contained in $C_0$, 
the vertex $i_1$ belongs to the cycle $C_{\alpha'}$ of $\mu_i^+(Q)$ as well as all the other vertices:
Again, write $C_{\alpha'}$ by $C_0$.

Continuing this argument, we get a SB quiver $(Q',\C')$ satisfying the following:
\begin{enumerate}[(i)]
\item $\C'$ admits a cycle $C_0$ to which all vertices belong.
\item The vertex $i_\ell$ is only in $C_0$ if $\mult(C)=1$,
otherwise there is a loop at $i_\ell$ 
with multiplicity $\mult(C)$.
\item $\C'$ has a cycle of the form
\[\xymatrix{h \ar[r] & i_1 \ar[r] & i_2 \ar[r] & \cdots \ar[r] & i_{\ell-1} \ar[r] & \cdots \ar[r] & 
\bullet \ar@/_1.5pc/[llllll]^{} }\]
with multiplicity 1 or $\mult(C_\beta)$.
\end{enumerate}

\item Assume that all vertices are in $C_0$.
Let $i$ be a vertex of $Q$ having a loop $\xymatrix{i\ar@(ur,dr)^\alpha}$ with $C_\alpha\neq C_0$.
Let $\xymatrix{h \ar[r] & i \ar[r] & j}$ be a path in $C_0$.
Then mutating at $i$ twice, we obtain a SB quiver $(Q',\C')$ satisfying the following: 
\begin{enumerate}[(i)]
\item $\C'$ admits a cycle $C_0$ to which all vertices belong.
\item There is an arrow $\xymatrix{h\ar[r]&j}$ in $C_0$.
\item There is a loop $\alpha'$ at $i$ with $C_{\alpha'}\neq C_0$. 
\end{enumerate}
\end{enumerate}
\end{method}

Now we are ready to prove Theorem \ref{reduction theorem}.

\begin{proof}[Proof of Theorem \ref{reduction theorem}.]
By applying (R1), we can assume that all vertices are in $C_0$.
Finish if $s\leq1$. 

Assume $s\geq2$ and let $C_s$ be a cycle of the form
$\xymatrix{i_1 \ar[r] & i_2 \ar[r] & \cdots \ar[r] & i_\ell \ar@/^1pc/[lll]^{}}$.

(i) If $C_s$ is not a loop, 
then we do (R2) for $C_s$:
Denote by $(Q',\C')$ the new SB quiver.
There exists a cycle $C_0'$ in $\C'$ to which all vertices belong and
it is observed that the vertex $i_\ell$ of $Q'$ is only in $C_0'$ or
there is a loop at $i_\ell$.
Moreover, there exists a cycle $C$ in $\C'$ having of the form
$\xymatrix{h \ar[r] & i_1 \ar[r] & \cdots \ar[r] & i_{\ell-1} \ar[r] & \cdots \ar[r] & \bullet \ar@/^1pc/[lllll]^{}}$.
If the number of vertices in $C$ is greater than $\ell$, then we obtain that the number of cycles in $\C'$ is at most $s$.

Assume that $C$ is just of the form
$\xymatrix{h \ar[r] & i_1 \ar[r] & \cdots \ar[r] & i_{\ell-1} \ar@/^1pc/[lll]^{}}$.
Let $i$ be the vertex of $Q'$ having the unique arrow $h\to i$ in $C_0'$.
By applying (R3) if necessary, we may suppose that there is no loop at $i$.
As $s\geq2$, we can also assume that there is an arrow $i\xrightarrow{\beta} j$ of $Q'$ with $C_\beta\neq C_0'$.
Doing (R2) for $C$, one gets a new SB quiver $(Q'', \{C_0'',\cdots,C_v''\})$ with $v\leq s$.


(ii) Let $t\geq2$.
Assume that $C_{t+1},\cdots,C_s$ are loops
but $C_t$ is not a loop.
Put $C_t:=\left\{\xymatrix{i_1 \ar[r] & i_2 \ar[r] & \cdots \ar[r] & i_\ell \ar@/^1pc/[lll]^{}}\right\}$
and let $h$ be the vertex having the unique arrow $i_1\to h$ in $C_0$.
By applying (R3) if necessary, we may suppose that $h$ does not belong to $C_{t'}$ for $t+1\leq t'\leq s$.
Then do (R2) for $C_t$ and let $(Q',\C')$ be the new SB quiver.
We see that there is a loop at $i_\ell$ in $Q'$ and $\C'$ has a cycle $C$ of the form
$\xymatrix{h \ar[r] & i_1 \ar[r] & \cdots \ar[r] & i_{\ell-1} \ar[r] & \cdots \ar[r] & \bullet \ar@/^1pc/[lllll]^{}}$.
By the same argument in (i) , we can get a SB quiver $(Q'',\{C_0'',\cdots,C_s''\})$
such that $C_u''$ is a loop for $t\leq u\leq s$.

Thus, the proof is complete.
\end{proof}

\section{Brauer graph algebras}\label{Brauer graph}

In this section, we introduce a \emph{flip} of Brauer graphs and
show that it is compatible with a tilting mutation of Brauer graph algebras.
In particular any Brauer graph algebra is symmetric special biserial (and converse also holds \cite{Ro}), 
therefore all the theorems stated in the previous sections are applied to Brauer graph algebras.


We recall the definition of Brauer graphs.

\begin{definition}
A \emph{Brauer graph} $G$ is a graph with the following data:
\begin{enumerate}[(i)]
\item There exists a cyclic ordering of the edges adjacent to each vertex, 
usually described by the clockwise ordering.
\item For every vertex $v$, there exists a positive integer $m_v$ assigned to $v$, called the \emph{multiplicity} of $v$. 
We say that a vertex $v$ is \emph{exceptional} if $m_v>1$ 
\end{enumerate}
\end{definition}

\subsection{Flip of Brauer graphs}

Let $G$ be a Brauer graph. 
For a cyclic ordering $(\cdots,i,j,\cdots)$ adjacent to a vertex $v$ with $j\neq i$,
we write $j$ by $e_v(i)$ and denote by $v_j(i)$ (simply, $v(i)$) the vertex of $j$ distinct from $v$ if it exists,
otherwise $v(i):=v$.

We say that an edge $i$ of $G$ is \emph{external} if it has a vertex with cyclic ordering which consists of only $i$, 
otherwise it is said to be \emph{internal}. 

We now introduce flip of Brauer graphs. 

\begin{definition}\label{flip of Brauer graph}
Let $G$ be a Brauer graph and fix an edge $i$ of $G$. 
We define the \emph{flip} $\mu_i^+(G)$ of $G$ as follows:

\begin{enumerate}[CASE (1)]
\item[Case (1)] The edge $i$ has the distinct two vertices $v$ and $u$:
\begin{itemize}
\item If $i$ is internal, then 
\begin{enumerate}[(Step 1)]
\item detach $i$ from $v$ and $u$;
\item attach it to $v(i)$ and $u(i)$ by $e_{v(i)}(e_v(i))=i$ and $e_{u(i)}(e_u(i))=i$, respectively.
\end{enumerate}
Locally there are the following three cases:
\begin{enumerate}[(i)]
\item \mbox{}

$\begin{CD}
\fbox{$\begin{array}{c}
\xymatrix@R=0.5cm @C=1cm{
        &        u(i) \\
     v \ar@{-}[r]^i \ar@{-}[d]_{e_v(i)} & u \ar@{-}[u]_{e_u(i)}    \\
v(i) &       
}
\end{array}$} @>>>
\fbox{$\begin{array}{c}
\xymatrix@R=0.5cm @C=1cm{
        &        u(i) \\
     v  \ar@{-}[d]_{e_v(i)} & u \ar@{-}[u]_{e_u(i)}    \\
v(i) \ar@{-}[ruu]_i &       
}
\end{array}$}
\end{CD}$

\item \mbox{}

$\begin{CD}
\fbox{$\begin{array}{c}
\xymatrix{
v \ar@<0.2pc>@{-}[r]^i & u \ar@{-}@<0.2pc>[l]^{e_v(i)} \ar@{-}[r]^{e_u(i)} & u(i) 
}\\
(u=v(i))
\end{array}$} @>>>
\fbox{$\begin{array}{c}
\xymatrix{
v \ar@{-}[r]_{e_v(i)} & u \ar@{-}@<0.2pc>[r]^i & u(i) \ar@{-}@<0.2pc>[l]^{e_u(i)}
} 
\end{array}$} 
\end{CD}$

\item \mbox{}

$\begin{CD}
\fbox{$\begin{array}{c}
\xymatrix{
\circ \ar@{-}[r] & v \ar@<0.2pc>@{-}[r]^i & u \ar@{-}@<0.2pc>[l]^{e_v(i)}
} \\
(e_u(i)=e_v(i))
\end{array}$} @>>>
\fbox{$\begin{array}{c}
\xymatrix{
\circ \ar@{-}[r] & v \ar@{-}@<0.2pc>[r]^{e_v(i)} & u \ar@{-}@<0.2pc>[l]^{i}
}
\end{array}$} 
\end{CD}$
\end{enumerate}

\item If $i$ is external, namely $u$ is at end, then 
\begin{enumerate}[(Step 1)]
\item detach $i$ from $v$;
\item attach it to $v(i)$ by $e_{v(i)}(e_v(i))=i$.
\end{enumerate}
The local picture is the following:
\begin{enumerate}[(iv)]
\item \mbox{}

$\begin{CD}
\fbox{$\begin{array}{c}
\xymatrix@R=0.5cm @C=1cm{
     v \ar@{-}[r]^i \ar@{-}[d]_{e_v(i)} & u    \\
v(i) &       
}
\end{array}$} @>>>
\fbox{$\begin{array}{c}
\xymatrix@R=0.5cm @C=1cm{
     v  \ar@{-}[d]_{e_v(i)} & u   \\
v(i) \ar@{-}[ru]_i &       
}
\end{array}$}
\end{CD}$
\end{enumerate}
\end{itemize}

\item[Case (2)] The edge $i$ has only one vertex $v$:
\begin{itemize}
\item If there exists the distinct two edges $h$ and $j$ written by $e_v(i)$, then 
\begin{enumerate}[(Step 1)]
\item detach $i$ from $v$;
\item attach it to $v_h(i)$ and $v_j(i)$ by $e_{v_h(i)}(h)=i$ and $e_{v_j(i)}(j)=i$.
\end{enumerate}
Locally there are the following two cases:
\begin{enumerate}[(i)]
\item[(v)] \mbox{}

$\begin{CD}
\fbox{$\begin{array}{c}
\\
\xymatrix{
v_h(i) \ar@{-}[r]^(0.6)h & v \ar@{-}[r]^(0.4)j \ar@{}@/^2pc/[rr] \ar@{}[rr]|(1)i   & v_j(i) & \ar@{}@/^2pc/[ll] 
} \\
\mbox{}
\end{array}$} \\
@VVV \\
\fbox{$\begin{array}{c}
\\
\xymatrix{
v_h(i) \ar@{-}[r]^h & v \ar@{-}[r]^j & v_j(i) \ar@{-}@/^2pc/[r] \ar@{}[r]|(1)i & \ar@{-}@/^2pc/[lll]
}
\\
\\
\end{array}$}
\end{CD}$

\item[(vi)] \mbox{}

$\begin{CD}
\fbox{$\begin{array}{c}
\xymatrix{
v \ar@{-}@(r,d)^(0.7)i \ar@{-}@/^1pc/[rr]^j \ar@{-}@/_1.5pc/[rr]|(0.21)\hole_(0.7)h    & & u 
}\\
(u=v_j(i)=v_h(i))
\end{array}$} @>>>
\fbox{$\begin{array}{c}
\xymatrix{
v \ar@{-}@/^1.5pc/[rr]|(0.79)\hole^(0.3)j \ar@{-}@/_1pc/[rr]_h    & & u \ar@{-}@(u,l)_(0.2)i
}
\end{array}$}
\end{CD}$
\end{enumerate}

\item Otherwise,
\begin{enumerate}[(Step 1)]
\item detach $i$ from $v$;
\item attach it to the only one vertex $v(i)$ by $e_{v(i)}(e_v(i))=i$.
\end{enumerate}
The local picture is the following:
\begin{enumerate}[(i)]
\item[(vii)] \mbox{}

$\begin{CD}
\fbox{$\begin{array}{c}
\xymatrix{
v \ar@{-}[r]_(0.45){e_v(i)} \ar@{-}@(ul,ur)^i & v(i)
}
\end{array}$} @>>>
\fbox{$\begin{array}{c}
\xymatrix{
v \ar@{-}[r]_(0.45){e_v(i)}  & v(i) \ar@{-}@(ul,ur)^i
}
\end{array}$}
\end{CD}$
\end{enumerate}

\end{itemize}
\end{enumerate}
In all cases, the multiplicity of any vertex does not change.

Dually, we define $\mu_i^-(G)$ by $\mu_i^-(G):=\left(\mu_i^+(G^{\rm op})\right)^{\rm op}$
where the opposite Brauer graph, namely its cyclic ordering is described by counter-clockwise,
is denoted by $G^{\rm op}$.
\end{definition}

Every case of flip of Brauer graphs is covered in Definition \ref{flip of Brauer graph}.

We also point out that our flip of Brauer graphs can be regarded as a generalization of flip of triangulations 
of surfaces \cite{FST, MS}.

\begin{example}\label{examples of flip}
For a Brauer graph, we denote by $\bullet$ an exceptional vertex and 
by $\circ$ a non-exceptional vertex.
\begin{enumerate}[(1)]
\item Let $G$ be the Brauer graph 
\[\xymatrix{ 
& \circ \ar@{-}[dl]_1 \ar@{-}[dr]^3 & \\
\circ \ar@{-}[rr]_2 & & \circ
}\]
Then the flip of $G$ at 1 is 
\[\mu_1^+(G)=\fbox{$\begin{array}{c}
\\
\xymatrix{
\circ \ar@{-}[r]^2 & \circ \ar@{-}[r]^3 \ar@/^2pc/@{-}[rr]\ar@{-}@/_2pc/[rr]\ar@{}[rr]|(1)1 & \circ & 
}\\[1.7em]
\end{array}$}\]

\item  Let $G$ be the Brauer graph
\[\xymatrix{
\circ \ar@{-}[r]_3 & \bullet \ar@{-}[r]_2 \ar@{-}@(ul,ur)^1 & \circ 
}\]
such that the multiplicity of the exceptional vertex $\bullet$ is 2.
Then we have the flip of $G$ at 1:
\[\mu_1^+(G)=\fbox{$\begin{array}{c}
\xymatrix{
\circ \ar@{-}[r]_3 & \bullet \ar@{-}[r]_2  & \circ \ar@{-}@(ul,ur)^1
}\end{array}$}\]

\item Let $G$ be the Brauer graph
\[\xymatrix{
\circ \ar@{-}[r]^3 & \bigcirc \ar@{-}@<0.4pc>[rr]^1 & \circ \ar@{-}[r]|4 & \bigcirc \ar@{-}@<0.4pc>[ll]^2
}\]
Then the flip of $G$ at 1 is observed by
\[\mu_1^+(G)=\fbox{$\begin{array}{c}
\xymatrix{
\circ \ar@{-}[r]^3 & \bigcirc \ar@{-}@<0.4pc>[rr]^2 & \circ \ar@{-}[r]|4 & \bigcirc \ar@{-}@<0.4pc>[ll]^1
}\end{array}$}\]

\item Let $G$ be the Brauer graph  
\[\xymatrix{
\circ \ar@<0.2pc>@{-}[r]^1 & \circ \ar@<0.2pc>@{-}[l]^3 \ar@{-}[r]^2 & \circ
}\]
Then the flip of $G$ at 1 is:
\[\mu_1^+(G)=\fbox{$\begin{array}{c}
\xymatrix{
\circ \ar@{-}[r]^3 & \circ \ar@<0.2pc>@{-}[r]^1 & \circ \ar@<0.2pc>@{-}[l]^2 
}\end{array}$}\]
\end{enumerate}
\end{example}

\subsection{Compatibility of flip and tilting mutation}

For a Brauer graph $G$, we denote by $\vx(G)$ the set of the vertices of $G$.

We construct a SB quiver from a Brauer graph.

\begin{definition}
Let $G$ be a Brauer graph.
A \emph{Brauer quiver} $Q=Q_G$ is a finite quiver given by a Brauer graph $G$ as follows:
\begin{enumerate}[(i)]
\item There exists a one-to-one correspondence between vertices of $Q$ and edges of $G$.
\item For two distinct edges $i$ and $j$, 
an arrow $\xymatrix{i\ar[r]&j}$ of $Q$ is drawn if there exists a cyclic ordering of the form $(\cdots,i,j,\cdots)$.
\item For an edge $i$ of $G$, we draw a loop at $i$ if it has an exceptional vertex which is at end.
\end{enumerate}
Then $Q$ is special. 

For each vertex $v$ of $G$, let $(i_1,i_2,\cdots,i_s,i_1)$ be a cyclic ordering at $v$.
Then we define a cycle $C_v$ by 
\[\xymatrix{
i_1 \ar[r] & i_2 \ar[r] & \cdots \ar[r] & i_s \ar@/_1pc/[lll]}\]
with multiplicity $m_v$ if $s\neq1$, otherwise by an empty set.
We have a cycle-decomposition $\C=\C_G=\{C_v\ |\ v\in\vx(G)\}$.

Thus we obtain a SB quiver $(Q,\C)$.
\end{definition}

For a Brauer graph $G$,
a \emph{Brauer graph algebra} $A=A_G$ is a symmetric special biserial algebra associated with the SB quiver $(Q_G,\C_G)$.

It is known that the notion of Brauer graph algebras is nothing but that of symmetric special biserial algebras.
The following result is obtained.

\begin{proposition}\label{SSBA is BGA}
\begin{enumerate}[{\rm (1)}]
\item \cite{Ro,An} An algebra is a Brauer graph algebra if and only if it is symmetric special biserial.
\item The property of being a Brauer graph algebra is derived invariant.
\end{enumerate}
\end{proposition}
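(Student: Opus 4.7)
The plan is to reduce (1) to Lemma~\ref{cycle decomposition} by producing a bijection between isoclasses of Brauer graphs and isoclasses of SB quivers, and then to deduce (2) from (1) together with Proposition~\ref{ssb under derived}. The ``only if'' direction of (1) is tautological: by definition a Brauer graph algebra is $A_{(Q_G,\C_G)}$ for some Brauer graph $G$, and Lemma~\ref{cycle decomposition} guarantees that this algebra is symmetric special biserial.

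The content lies in the converse. Given a SB quiver $(Q,\C)$, I would define an inverse Brauer graph $G=G(Q,\C)$ as follows. The edges of $G$ are declared to be the vertices of $Q$. Each cycle $C\in\C$ of length $\ge 2$ yields a vertex $v_C$ of $G$ of multiplicity $\mult(C)$, incident to the edges corresponding to the vertices of $Q$ on $C$, with cyclic ordering read off from the arrows of $C$. Each loop-cycle in $\C$, which has multiplicity strictly greater than $1$ by the SB-axiom, contributes an exceptional end-vertex of that multiplicity attached to the corresponding edge. Finally, for every vertex $i$ of $Q$ that lies on only one cycle of $\C$, attach a non-exceptional end-vertex to supply the second endpoint of the edge $i$. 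One then verifies that $(Q,\C)\mapsto G(Q,\C)$ is inverse to $G\mapsto (Q_G,\C_G)$ up to isomorphism; combined with Lemma~\ref{cycle decomposition}, this gives (1).

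The main obstacle is the case analysis behind that verification. The two constructions must be shown to interact correctly at every local configuration appearing in the definitions: loops of $Q$ versus external edges of $G$, vertices of $Q$ appearing in a single cycle versus edges carrying a non-exceptional end-vertex, cycles of length $1$ with multiplicity $>1$ versus exceptional end-vertices, and multiplex pairs of vertices of $Q$ versus pairs of edges of $G$ sharing both endpoints. None of these checks is hard, but each combinatorial case must be exhausted, and that bookkeeping is the essential work. Once (1) is known, part~(2) is immediate: if $A$ and $B$ are derived equivalent and $A$ is a Brauer graph algebra, then by (1) the algebra $A$ is symmetric special biserial, so Proposition~\ref{ssb under derived} forces $B$ to be symmetric special biserial as well, and applying (1) in the reverse direction to $B$ shows that $B$ is itself a Brauer graph algebra.
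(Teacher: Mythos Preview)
Your argument for (2) is exactly the paper's: the paper's entire proof reads ``The second assertion follows from the first assertion and Proposition~\ref{ssb under derived}.'' For (1) the paper gives no proof at all, treating it as a citation to \cite{Ro,An}; your sketch of the inverse construction $G(Q,\C)$ and the accompanying case analysis is correct in outline and amounts to supplying the details the paper leaves to the literature.
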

\begin{proof}
The second assertion follows from the first assertion and Proposition \ref{ssb under derived}.
\end{proof}



For an edge $i$ of a Brauer graph $G$, 
we say that $G$ \emph{has multi-edges} at $i$ if there exists a subgraph 
$\xymatrix{\circ \ar@{}[r]^(0)v \ar@{}[r]^(1)u \ar@<0.2pc>@{-}[r]^i & \circ \ar@{-}@<0.2pc>[l]^j}$ of $G$ such that 
the cyclic orderings at $v$ and $u$ are $(\cdots,i,j,\cdots)$ and $(\cdots,j,i,\cdots)$, respectively;
it is allowed that $u=v$.

We have the following easy observation. 

\begin{proposition}
Let $G$ be a Brauer graph and $i$ be an edge of $G$. 
Then $Q_G$ is multiplex at $i$ if and only if $G$ has multi-edges at $i$.
\end{proposition}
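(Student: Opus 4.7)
The plan is to translate directly between the cyclic combinatorics at the vertices of $G$ and the arrow-and-cycle structure of $Q_G=(Q_G,\mathcal{C}_G)$, using the explicit construction of the Brauer quiver. The key correspondence to set up first is this: arrows of $Q_G$ are in bijection with consecutive pairs in the cyclic orderings of $G$; more precisely, an arrow $x\to y$ arises from a pair $(x,y)$ appearing consecutively in the cyclic ordering at some vertex $w$ of $G$, and this arrow lies in the cycle $C_w$. Consequently, $\na(x\to y)$ in $C_w$ is the arrow $y\to e_w(y)$ coming from the pair immediately after $(x,y)$ in the same cyclic ordering at $w$.

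For the implication ``$G$ has multi-edges at $i$ $\Rightarrow$ $Q_G$ is multiplex at $i$'', I would take the parallel edges $i,j$ between vertices $v$ and $u$ (possibly $v=u$) with cyclic orderings $(\ldots,i,j,\ldots)$ at $v$ and $(\ldots,j,i,\ldots)$ at $u$. These produce arrows $\alpha:i\to j$ in $C_v$ and $\beta:j\to i$ in $C_u$. When $v\neq u$, the cycles $C_v$ and $C_u$ differ, so $\na(\alpha)\in C_v$ cannot equal $\beta\in C_u$, and dually $\na(\beta)\neq\alpha$, which is exactly the multiplex condition. When $v=u$, both $i$ and $j$ are loops at $v$; here the two specified consecutive pairs $(i,j)$ and $(j,i)$ appear at different positions in the cyclic ordering at $v$, and one inspects this ordering (using the standard convention for the way loops enter cyclic orderings of Brauer graphs) to verify that the immediate successors of $\alpha$ and $\beta$ in the single cycle $C_v$ are genuinely different arrows from $\beta$ and $\alpha$ respectively.

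For the converse, given arrows $\alpha:i\to j$ and $\beta:j\to i$ of $Q_G$ with $\na(\alpha)\neq\beta$ and $\na(\beta)\neq\alpha$, the construction exhibits $\alpha$ as coming from a consecutive pair $(i,j)$ in the cyclic ordering at some vertex $v$ (so $v$ is a common endpoint of the edges $i$ and $j$), and $\beta$ as coming from $(j,i)$ at some vertex $u$ (another common endpoint). Since each edge has at most two endpoints, with a loop contributing both ends at its single vertex, the fact that $\{v,u\}$ consists of common endpoints of $i$ and $j$ forces these two edges to be parallel, giving precisely the multi-edges configuration, with $u=v$ corresponding to two loops at a common vertex.

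The main obstacle is the loop case $u=v$: here one must carefully track how the cyclic ordering at a single vertex encodes two loops, and verify that the multiplex and multi-edges conditions correspond in each concrete layout of the loops within the cyclic ordering. Apart from this case-by-case inspection, the argument is a bookkeeping exercise on the construction of $Q_G$ from $G$, with no additional ingredient required.
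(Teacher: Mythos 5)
The paper itself offers no proof of this proposition (it is introduced as an ``easy observation''), so there is nothing to compare your argument against; it has to stand on its own. Your main line of reasoning is the right one, and the case $v\neq u$ is handled cleanly: an arrow $\alpha:i\to j$ of $Q_G$ comes from a consecutive pair $(i,j)$ at a common endpoint $v$, it lies in $C_v$, and since the cycle-decomposition partitions the arrow set, $\na(\alpha)\in C_v$ can never coincide with an arrow of $C_u$ for $u\neq v$; conversely two witnesses $\alpha,\beta$ arising at distinct vertices force $i$ and $j$ to share both endpoints. That part is correct and complete.

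The genuine gap is exactly the case you defer with ``one inspects this ordering'': when the two consecutive pairs $(i,j)$ and $(j,i)$ occur at a single vertex $v$, the inspection does not come for free, and under a naive reading of the definitions it actually \emph{fails}. Concretely, take $i$ and $j$ to be two loops at one vertex $v$ with cyclic ordering $(i,j,i,j)$. The cycle $C_v$ is the $4$-cycle $i\xrightarrow{a_1}j\xrightarrow{a_2}i\xrightarrow{a_3}j\xrightarrow{a_4}i$, and one checks that every choice of $\alpha\in\{a_1,a_3\}$ and $\beta\in\{a_2,a_4\}$ satisfies either $\na(\alpha)=\beta$ or $\na(\beta)=\alpha$, so $Q_G$ is \emph{not} multiplex at $i$ --- yet the cyclic ordering at $v$ literally contains both $(\cdots,i,j,\cdots)$ and $(\cdots,j,i,\cdots)$. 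A similar issue arises for two parallel non-loop edges meeting a common endpoint $v$ of valency two: the ordering $(i,j)$ at $v$ cyclically contains both pairs, so if one is allowed to take $u=v$ there, the multi-edges condition holds while $C_v$ is just the $2$-cycle $i\rightleftarrows j$ with $\na(\alpha)=\beta$. To close the argument you must fix the convention: the two conditions $e_v(i)=j$ and $e_u(j)=i$ are attached to the two \emph{distinct ends} of the parallel pair (the two distinct shared vertices, or the two distinct occurrences in the cyclic ordering when $i,j$ are loops), and then verify the equivalence under that reading; in your converse you likewise need to rule out, using the multiplex hypothesis itself, the possibility that $v=u$ with $i,j$ non-loops (valency-two vertex, which forces $\na(\alpha)=\beta$) before concluding that $v=u$ yields two loops. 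Without pinning this down, the one case that actually threatens the biconditional is left unexamined.
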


It is not difficult to see that flip of each Brauer graph $G$ coincides with right mutation of 
the corresponding SB quiver $(Q_G, \C_G)$, that is,
we have:

\begin{proposition}\label{mutation and flip}
Let $G$ be a Brauer graph and $i$ be an edge of $G$. 
Then one has $(Q_{\mu_i^+(G)},\C_{\mu_i^+(G)})=\mu_i^+(Q_G,\C_G)$.
\end{proposition}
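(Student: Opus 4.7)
The plan is a case-by-case verification matching the seven local pictures (i)--(vii) of Definition \ref{flip of Brauer graph} with the three mutation rules of Definitions \ref{QM}, \ref{QM multiplex} and \ref{QM multiplex 2}. First I would set up the basic dictionary: edges of $G$ correspond to vertices of $Q_G$, and the cyclic ordering $(i_1,\ldots,i_s)$ at a vertex $v$ of $G$ produces the cycle $i_1\to\cdots\to i_s\to i_1$ of multiplicity $m_v$ (an external end with $m_v>1$ contributes a loop at the corresponding vertex of $Q_G$). Hence the arrows at a vertex $i$ of $Q_G$ are exactly encoded by the immediate predecessors and successors of $i$ in the (at most two) cyclic orderings at the endpoints of $i$ in $G$. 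Combined with the proposition immediately preceding, this identifies the non-multiplex situations with flip cases (i), (iv), (v), (vii), and the multiplex situations with flip cases (ii), (iii), (vi).

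For the non-multiplex regime I would invoke Definition \ref{QM}. In the paradigmatic Case (i), step (QM1) creates the arrows $e_v^{-1}(i)\to e_v(i)$ and $e_u^{-1}(i)\to e_u(i)$, which records the removal of $i$ from the cyclic orderings at $v$ and $u$; step (QM2) deletes the obsolete arrows into $i$; and step (QM3-1) inserts $i$ as an intermediate vertex in the cycles at $v(i)$ and $u(i)$ immediately after $e_v(i)$ and $e_u(i)$ respectively, matching the attachment instruction $e_{v(i)}(e_v(i))=i$ of flip. The cycle-decomposition produced by Definition \ref{relations of QM} inherits the multiplicities $m_v,m_u$ on the relevant sides, and the same recipe specialises correctly to Cases (iv), (v), (vii); the subcases (QM1-2) and (QM3-2) handle the situations that produce or absorb a loop, which translate on the Brauer graph side to the appearance or disappearance of an external exceptional end.

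For the multiplex cases (ii) and (iii), the double arrows at $i$ in $Q_G$ encode the two parallel edges between $v$ and $u$, and Definition \ref{QM multiplex} applies: rules (QM1)' and (QM3)' describe precisely the reattachment prescribed by flip, while subcase (QM1-2)' produces a loop exactly when the two multi-edges share both endpoints, matching Case (iii). Case (vi), in which $i$ is a loop at $v$ with two parallel edges $j,h$ between $v$ and another vertex $u$, falls under the degenerate situation $j'=h$ of Definition \ref{QM multiplex 2}: the SB quiver is declared unchanged under mutation, and direct inspection of the flip shows that $G$ and $\mu_i^+(G)$ are isomorphic (the loop $i$ simply moves from $v$ to $u$), so that $(Q_{\mu_i^+(G)},\C_{\mu_i^+(G)})$ and $(Q_G,\C_G)$ coincide.

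The main obstacle will be bookkeeping: each case requires tracking exactly which arrows are created, deleted or reinserted, and which cycles absorb or split, while verifying that multiplicities are preserved in accordance with the rule $\mult(C_x)=\mult(C_\alpha)$ on the mutation side corresponding to preservation of vertex multiplicities on the Brauer graph side. No single case is deep; the care is in aligning the clockwise convention for cyclic orderings at vertices of $G$ with the direction of arrows in $Q_G$ so that the flip rules and the mutation rules compare correctly across every geometric configuration.
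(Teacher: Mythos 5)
Your overall strategy --- unwinding both definitions and checking the local configurations one by one --- is the right one, and it is essentially all the paper offers for this statement (the proposition is presented as a direct observation, with the worked examples serving as the verification). However, your dictionary between the seven flip pictures and the three mutation cases contains a genuine error. By the paper's own criterion, $Q_G$ is multiplex at $i$ exactly when $G$ has multi-edges at $i$, i.e.\ when there is a second edge $j$ with the \emph{same pair of endpoints} as $i$ and with the cyclic orderings at those endpoints interleaved as $(\cdots,i,j,\cdots)$ and $(\cdots,j,i,\cdots)$. In flip case (vi) the edge $i$ is a loop at $v$ while $h$ and $j$ join $v$ to $u\neq v$, so no edge is parallel to $i$ and the vertex $i$ of $Q_G$ is \emph{not} multiplex: all four arrows $i\to j$, $j\to i$, $i\to h$, $h\to i$ lie in the single cycle $C_v$ coming from the ordering at $v$, so $\beta=\na(\alpha)$ for every returning pair. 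Case (vi) must therefore be handled by Definition \ref{QM}, and the mutation there is far from the identity: (QM1) contracts the $4$-cycle at $v$ (multiplicity $m_v$) to a $2$-cycle between $h$ and $j$, while (QM3-1) inserts $i$ twice into the cycle $C_u$ (multiplicity $m_u$) --- which is exactly what moving the loop from $v$ to $u$ does on the Brauer graph side. Your claim that the SB quiver is unchanged is false, and even if $G$ and $\mu_i^+(G)$ were abstractly isomorphic this would not yield the required equality of labelled SB quivers, since in particular the multiplicities $m_v$ and $m_u$ trade places between the two cycles.

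Two related corrections. First, Definition \ref{QM multiplex 2} (the degenerate case $j'=h$) corresponds on the graph side not to picture (vi) but to three pairwise parallel edges $j',i,j$ between $v$ and $u$ with orderings $(\cdots,j',i,j,\cdots)$ at $v$ and $(\cdots,j,i,j',\cdots)$ at $u$ --- a special instance of picture (ii) --- and there one checks directly that the flip reattaches $i$ to exactly the positions it was detached from, so both operations are the identity. Second, picture (ii) is not uniformly multiplex: the defining condition $u=v(i)$ says only that $e_v(i)$ is parallel to $i$, and whether the two cyclic orderings interleave depends on where $e_v(i)$ sits in the ordering at $u$; so the verification of (ii) has to be split between Definition \ref{QM} and Definition \ref{QM multiplex}. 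Until the seven pictures are re-sorted along the multi-edges criterion rather than by their superficial shape, the case analysis does not close up.
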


We observe that Example \ref{examples of QM} (1)--(3) and Example \ref{example of multiplex} coincide with 
Example \ref{examples of flip} (1)--(3) and (4), respectively.

Applying Theorem \ref{compatibility} to Brauer graph algebras,
we figure out that flip of Brauer graph is compatible with tilting mutation of Brauer graph algebras.

\begin{theorem}\label{main Brauer graph}
Let $G$ be a Brauer graph and $i$ be an edge of $G$. 
\begin{enumerate}[{\rm (1)}]
\item We have an isomorphism $A_{\mu_i^+(G)}\simeq \mu_i^+(A_G)$.
\item The algebra $\mu_i^+(A_G)$ has the Brauer graph $\mu_i^+(G)$.
\item The algebra $A_{\mu_i^+(G)}$ is derived equivalent to $A_G$.
\end{enumerate}
\end{theorem}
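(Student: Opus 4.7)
The plan is to deduce all three parts from the theorems already at our disposal, essentially by unwinding definitions; no substantial new calculation is needed.

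First I would prove part (1), which is the heart of the matter. By definition the Brauer graph algebra $A_G$ is the symmetric special biserial algebra $A_{(Q_G,\C_G)}$ associated with the SB quiver $(Q_G,\C_G)$. Theorem \ref{compatibility} applied to $A = A_G$ and the SB quiver $(Q_G,\C_G)$ yields an isomorphism
\[
A_{\mu_i^+(Q_G,\C_G)} \simeq \mu_i^+(A_G).
\]
On the other hand, Proposition \ref{mutation and flip} identifies $\mu_i^+(Q_G,\C_G)$ with $(Q_{\mu_i^+(G)},\C_{\mu_i^+(G)})$, and the latter SB quiver is, by definition of a Brauer graph algebra, the SB quiver attached to $A_{\mu_i^+(G)}$. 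Chaining these two identifications gives $A_{\mu_i^+(G)} \simeq \mu_i^+(A_G)$.

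For part (2), I would use the bijection between symmetric special biserial algebras and Brauer graphs coming from Lemma \ref{cycle decomposition} together with Proposition \ref{SSBA is BGA}: under this bijection the Brauer graph of $A_{\mu_i^+(G)}$ is $\mu_i^+(G)$, and part (1) transports this property to $\mu_i^+(A_G)$. For part (3), Theorem \ref{compatibility} also records that $\mu_i^+(A_G)$ is derived equivalent to $A_G$, and combining this with the isomorphism in (1) gives the claimed derived equivalence $A_{\mu_i^+(G)} \simeq_{\mathrm{der}} A_G$.

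There is no real obstacle: all the serious work has already been done in Theorem \ref{compatibility} (whose proof is deferred to Section \ref{proof}) and in Proposition \ref{mutation and flip}. The only care required is bookkeeping in the proof of Proposition \ref{mutation and flip}, namely checking case by case that each of the seven local pictures (i)--(vii) in Definition \ref{flip of Brauer graph} produces exactly the SB quiver obtained by (QM1)--(QM3), (QM1)'--(QM3)', or the trivial multiplex case; but this is already established before the theorem we are asked to prove. Accordingly, the present theorem is essentially a formal corollary of the preceding material.
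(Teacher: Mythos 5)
Your proposal is correct and follows essentially the same route as the paper: the paper proves part (1) by exactly the chain $A_{\mu_i^+(G)} \simeq A_{(Q_{\mu_i^+(G)},\C_{\mu_i^+(G)})} \simeq A_{\mu_i^+(Q_G,\C_G)} \simeq \mu_i^+(A_{(Q_G,\C_G)}) \simeq \mu_i^+(A_G)$ using Proposition \ref{mutation and flip} and Theorem \ref{compatibility}, and dismisses (2) and (3) as immediate. No discrepancies.
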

\begin{proof}
We have isomorphisms
\[A_{\mu_i^+(G)} \simeq A_{(Q_{\mu_i^+(G)},\C_{\mu_i^+(G)})} 
               \stackrel{\ref{mutation and flip}}{\simeq} A_{\mu_i^+(Q_G,\C_G)} 
               \stackrel{\ref{compatibility}}{\simeq} \mu_i^+(A_{(Q_G,\C_G)})
               \simeq \mu_i^+(A_G).
\]
The second and the last assertion follow immediately.
\end{proof}

\begin{remark}\label{remark of Kauer}
Special cases of this theorem were given in \cite{K}, 
where he considered the cases (i)(iv) and (vii) in Definition \ref{flip of Brauer graph}.
\end{remark}


\subsection{(Double-) Star theorem}\label{star theorem}

In this subsection, we apply Theorem \ref{reduction theorem} to Brauer graph algebras 
and finally obtain a generalization of Rickard's star theorem. 

Let $G$ be a Brauer graph.
We denote by $\m_G$ the sequence $(m_{v_1},\cdots,m_{v_\ell})$ of the multiplicities of 
all vertices satisfying $m_{v_1}\geq\cdots\geq m_{v_\ell}$.

For a Brauer graph algebra $A$,
the Brauer graph of $A$ is denoted by $G_A$. 

A Brauer graph $G$ is said to be \emph{double-star}
if there exist two vertices $v$ and $u$ of $G$ such that any edge is either of the following:
\begin{itemize}
\item It is external having the vertex $v$:
\item It has both the vertices $v$ and $u$:
\item It has only the vertex $v$, that is, it is of the form $\xymatrix{v \ar@{-}@<0.4pc>@(ru,rd)^{} }$.
\end{itemize}
We call $v$ and $u$ \emph{center} and \emph{vice-center}, respectively.

We say that a Brauer double-star $G$ satisfies \emph{multiplicity condition}
if the multiplicities of the center and the vice-center are the first and the second greatest 
among them of all vertices of $G$, respectively.

As a consequence of Theorem \ref{reduction theorem}, the following theorem is obtained. 

\begin{theorem}\label{reduction of Brauer graph}
Any Brauer graph algebra $A$ is derived equivalent to a Brauer graph algebra with 
Brauer double-star $G$ satisfying multiplicity condition such that
\begin{enumerate}[{\rm (i)}]
\item the number of the edges of $G$ coincides with that of $G_A$ and
\item $\m_G=\m_{G_A}$, in particular $G$ and $G_A$ have the same number of exceptional vertices.
\end{enumerate}
\end{theorem}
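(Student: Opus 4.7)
The plan is to apply Theorem \ref{reduction theorem} at the level of SB quivers and translate the outcome back into Brauer graph language using Proposition \ref{SSBA is BGA}.

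Write $A = A_{G_A}$ and let $(Q,\C) = (Q_{G_A},\C_{G_A})$ be the associated SB quiver. After relabeling the cycles so that $\mult(C_0)\ge \mult(C_1)\ge \cdots \ge \mult(C_s)$, Theorem \ref{reduction theorem} yields a symmetric special biserial algebra $A'$ derived equivalent to $A$ whose cycle-decomposition $\{C'_0,\ldots,C'_v\}$ obeys its conditions (1)--(4). By Proposition \ref{SSBA is BGA}, $A' \simeq A_G$ for a unique Brauer graph $G$, and the task reduces to verifying that $G$ is a Brauer double-star satisfying the multiplicity condition, with the same number of edges as $G_A$ and $\m_G = \m_{G_A}$.

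Under the dictionary between cycles of the SB decomposition and Brauer vertices, condition (1) forces the vertex $v_0$ associated with $C'_0$ to be incident to every edge of $G$, qualifying it as center, while condition (4) forces each cycle $C'_\ell$ with $\ell\ge 2$, being an SB loop, to arise from an external edge of $G$ with an exceptional endpoint. The cycle $C'_1$ (if present) then corresponds to a second Brauer vertex $u$ which serves as vice-center, since every edge incident to $u$ is automatically also incident to $v_0$. Any remaining edges of $G$ are external at $v_0$ with non-exceptional endpoints, so $G$ is a Brauer double-star with center $v_0$ and vice-center $u$. Condition (3) gives $m_{v_0} = \mult(C_0)$ and $m_u = \mult(C_1)$, and the prescribed ordering makes these the two greatest multiplicities, confirming the multiplicity condition. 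The preservation of the edge count is immediate, since this count equals $|Q_0|$ and mutation of SB quivers leaves the vertex set unchanged.

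The principal obstacle is to verify $\m_G = \m_{G_A}$. Multiplicities strictly greater than $1$ are contributed precisely by cycles of multiplicity $>1$ in either decomposition, and these match by condition (3) together with the observation that every $C_\ell$ with $\ell>v$ has $\mult(C_\ell)=1$. What remains is to match the total count of $1$'s on each side, which reduces to the vertex-count equality $|V(G)|=|V(G_A)|$. I plan to handle this through the identity
\[
|V(G)| \;=\; |\C_G| + \bigl(2|Q_0| - \textstyle\sum_\ell |C_\ell|\bigr),
\]
whose second summand counts the non-exceptional endpoints of a Brauer graph from its SB datum, by checking that each elementary reduction step (R1)--(R3) of Method \ref{reduction method} alters $|\C|$ and $\sum_\ell |C_\ell|$ in a balanced manner preserving $|V(G)|$.
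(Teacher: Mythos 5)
Your argument follows the paper's proof essentially step for step: apply Theorem \ref{reduction theorem}, read off the double-star structure from conditions (1) and (4) (with $C'_0$ giving the center and $C'_1$ the vice-center), get the multiplicity condition from (3), and get the edge count from invariance of the number of simple modules. The one place you genuinely diverge is item (ii): the paper simply asserts that it follows from conditions (2) and (3) of Theorem \ref{reduction theorem}, whereas you correctly observe that those conditions only control the multiplicities strictly greater than $1$, so one must separately match the number of multiplicity-one vertices, i.e.\ show $|V(G)|=|V(G_A)|$. Your identity $|V(G)|=|\C_G|+\bigl(2|Q_0|-\sum_\ell|C_\ell|\bigr)$ is correct (the second summand counts the non-exceptional end-vertices, since each accounts for exactly one edge-endpoint not absorbed by a cycle), but the case-by-case audit of (R1)--(R3) that you defer can be short-circuited: by Proposition \ref{mutation and flip} and Theorem \ref{main Brauer graph}, every mutation occurring in the reduction is realized on Brauer graphs by a flip, and in every case (i)--(vii) of Definition \ref{flip of Brauer graph} the flip merely detaches one edge and re-attaches it to already existing vertices --- no vertex is created, no vertex loses all of its edges, and no multiplicity changes. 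Hence $\m$ is invariant under each step of the reduction, which yields (ii) immediately and closes the only step you left open.
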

\begin{proof}
Let $\Lambda$ be a symmetric special biserial algebra with a cycle-decomposition $\C:=\{C_0',\cdots,C_v'\}$ 
as in Theorem \ref{reduction theorem} and put $G:=G_\Lambda$.

We first show that $G$ is a Brauer double-star.
As the condition (1) in Theorem \ref{reduction theorem},
the cycle $C_0'$ corresponds to the center of $G$.
We see that the vice-center of $G$ is given by the cycle $C_1'$.
Considering also the condition (4) of Theorem \ref{reduction theorem},
we have that $G$ is a Brauer double-star.

By the condition (3) of Theorem \ref{reduction theorem},
it is obtained that $G$ satisfies multiplicity condition.
Since $A$ and $\Lambda$ are derived equivalent, 
they have the same number of non-isomorphic simple modules,
whence the condition (i) is satisfied.
As the condition (2) and (3) of Theorem \ref{reduction theorem},
it follows that $G$ satisfies the condition (ii). 
\end{proof}

We raise a question on classification of derived equivalence classes of Brauer graph algebras.

\begin{question}
For a given Brauer graph $G$,
is there up to isomorphism and opposite isomorphism 
a unique Brauer double-star algebra satisfying the multiplicity condition and which is derived equivalent to the algebra $A_G$?
\end{question}

It is well-known that this question has a positive answer if $G$ is a tree as a graph.
Such a Brauer graph is said to be a \emph{generalized Brauer tree}.
It is called \emph{Brauer tree} if it has at most one exceptional vertex. 
A \emph{(generalized) Brauer star} is a (generalized) Brauer tree and a Brauer double-star. 
Note that any edge of a generalized Brauer star is external
and every vertex can be a vice-center.

From Theorem \ref{reduction of Brauer graph}, 
we deduce star theorem for generalized Brauer tree algebras.
 
\begin{corollary}\label{Rickard reduction theorem}\cite{R2,MH}
\begin{enumerate}[{\rm (1)}]
\item Any generalized Brauer tree algebra $A$ is derived equivalent to a generalized Brauer star algebra $B$
with $\m_{G_B}=\m_{G_A}$ such that the multiplicity of the center is maximal. 
\item Derived equivalence classes of generalized Brauer tree algebras are determined by 
the number of the edges and the multiplicities of the vertices.
\end{enumerate}
\end{corollary}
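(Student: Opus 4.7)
The plan is to deduce both parts directly from Theorem \ref{reduction of Brauer graph}, the only substantial work being a short combinatorial check that a tree-shaped Brauer double-star is automatically a star.

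For part (1), I let $A$ be a generalized Brauer tree algebra with Brauer graph $G_A$. Theorem \ref{reduction of Brauer graph} produces a Brauer graph algebra $B$, derived equivalent to $A$, whose Brauer graph $G:=G_B$ is a double-star satisfying the multiplicity condition and obeys $|E(G)|=|E(G_A)|$ together with $\m_G=\m_{G_A}$. The multiplicity sequence records the multiset of vertices, so $|V(G)|=|V(G_A)|$; since $G_A$ is a tree this forces $|V(G)|=|E(G)|+1$, whence the connected graph $G$ is itself a tree. Now, by definition every edge of a Brauer double-star is incident to the center $v$, and since trees admit neither loops nor multi-edges, $G$ must be a star graph with $v$ at its center. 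The multiplicity condition then places the maximal multiplicity at $v$, proving (1).

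For part (2), the forward direction is an immediate consequence of (1): two generalized Brauer tree algebras with matching edge number and multiplicity sequence both reduce to generalized Brauer star algebras with the same combinatorial data, and these form a single derived equivalence class by the classical Rickard--Membrillo-Hern\'andez star theorem \cite{R2,MH}. Alternatively, one can permute the cyclic ordering at the center by successive flips at leaf edges, each of which is a derived equivalence preserving the star shape via Theorem \ref{main Brauer graph}. Conversely, derived equivalent algebras share the number of simple modules, hence the number of edges, and via the reduction to a Brauer star they share the full multiplicity sequence.

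The main obstacle is the uniqueness step embedded in part (2)---namely, that any two generalized Brauer star algebras with identical combinatorial data lie in the same derived equivalence class. This is precisely the content of the classical star theorem \cite{R2,MH} which I would invoke directly; everything else is a direct application of Theorem \ref{reduction of Brauer graph} combined with the brief graph-theoretic observation above.
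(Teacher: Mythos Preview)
Your proposal is correct and matches the paper's approach. The paper itself gives no explicit proof of this corollary: it simply states that the result is ``deduced from Theorem \ref{reduction of Brauer graph}'' and cites \cite{R2,MH}, which is precisely your strategy---apply the double-star reduction, observe that a tree-shaped double-star is a star, and invoke the classical result for the classification statement in (2).

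One small remark on your converse argument in (2): the sentence ``via the reduction to a Brauer star they share the full multiplicity sequence'' is not quite an argument. Reducing each of two derived equivalent generalized Brauer tree algebras to a star only tells you the two stars are derived equivalent to one another; it does not by itself explain why derived equivalent Brauer star algebras must have equal multiplicity sequences. That step requires an actual derived invariant (e.g.\ data read off the Cartan matrix, or the dimension of the center), which is part of what \cite{R2,MH} establish. Since you are already invoking \cite{R2,MH} for the uniqueness direction, this is harmless---just be aware that the reduction alone does not close the loop.
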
 




\section{A proof of main theorem}\label{proof}

In this section we prove Theorem \ref{compatibility}. 

Our proof of this theorem consists of three steps:
The first is to decide the shape of the quiver of the algebra $\mu_i^+(A)$.
The second is to give relations of $\mu_i^+(A)$, 
which is done by only determining every cycle.
The last is to furnish the multiplicities to each cycle.

\subsection{Preliminaries}

We prepare two important tools for our proof of the main theorem.

To determine the structure of a finite dimensional algebra $A$, 
the data $\dim\Hom_A(P_i,P_j)$ for projective $A$-modules $P_i$ and $P_j$, 
namely the \emph{Cartan data} of $A$,
play an important role. 

The following proposition gives the Cartan data of a derived equivalent algebra to $A$. 

\begin{proposition}\label{Cartan}\cite{H}
Let $A$ be a finite dimensional algebra. 
Let $T$ and $U$ be bounded complexes of projective $A$-modules 
satisfying $\Hom_{\Db(\mod A)}(T,U[n])=0$ for any integer $n\neq0$.
Then the following equality holds:
\[\dim\Hom_{\Db(\mod A)}(T,U)=\sum_{\ell, m}(-1)^{\ell+m}\dim\Hom_A(T^\ell,U^m)\]
where the $n$-th term of a complex $V$ is denoted by $V^n$.
\end{proposition}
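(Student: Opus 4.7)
The plan is to exploit the standard fact that, for bounded complexes of projectives, morphisms in the derived category are computed as the cohomology of the total Hom complex, and then to run an Euler-characteristic argument in two directions.

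First I would observe that since $T$ is a bounded complex of projectives, the natural map $\Hom_{\Kb(\mod A)}(T,U[n])\to \Hom_{\Db(\mod A)}(T,U[n])$ is an isomorphism for every integer $n$, and both sides are canonically identified with $H^n(\Hom_A^\bullet(T,U))$, where
\[
\Hom_A^\bullet(T,U)^n \;=\; \bigoplus_{p}\Hom_A(T^p, U^{p+n}).
\]
The boundedness of both $T$ and $U$ guarantees that this is a bounded complex of finite dimensional $k$-vector spaces (using that $A$ is finite dimensional and the $T^p, U^m$ are finitely generated projective), so in particular its Euler characteristic is well defined.

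Next I would compute $\chi(\Hom_A^\bullet(T,U))$ in two ways. Term-by-term,
\[
\chi(\Hom_A^\bullet(T,U)) \;=\; \sum_{n}(-1)^n\sum_{p}\dim\Hom_A(T^p, U^{p+n}) \;=\; \sum_{\ell, m}(-1)^{m-\ell}\dim\Hom_A(T^\ell, U^m),
\]
after reindexing with $\ell = p$ and $m = p+n$; since $(-1)^{m-\ell}=(-1)^{\ell+m}$, this is exactly the right-hand side of the asserted formula. On the other hand, the Euler characteristic can also be evaluated on cohomology:
\[
\chi(\Hom_A^\bullet(T,U)) \;=\; \sum_n (-1)^n \dim H^n(\Hom_A^\bullet(T,U)) \;=\; \sum_n (-1)^n \dim\Hom_{\Db(\mod A)}(T,U[n]).
\]

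The hypothesis $\Hom_{\Db(\mod A)}(T,U[n])=0$ for $n\neq 0$ then collapses this last sum to $\dim\Hom_{\Db(\mod A)}(T,U)$, and equating the two expressions for $\chi$ yields the desired equality. There is no real obstacle: once one passes from $\Db$ to $\Kb$ via projectivity, the entire statement is a formal consequence of the equality of the two expressions for the Euler characteristic of a bounded complex of finite dimensional vector spaces; the only thing one needs to be careful about is the sign convention in the reindexing step.
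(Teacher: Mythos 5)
Your argument is correct: the identification of $\Hom_{\Db(\mod A)}(T,U[n])$ with the $n$-th cohomology of the total Hom complex for bounded complexes of projectives, followed by computing the Euler characteristic both term-by-term and on cohomology, is exactly the standard proof of this formula. The paper gives no proof of its own but cites Happel, and the argument there is essentially the one you have written, so your proposal matches the intended approach.
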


We denote by $\smod A$ the stable category of $\mod A$.
It is well-known that $\smod A$ is triangulated category if $A$ is self-injective. 
Moreover, a derived equivalence between two self-injective algebras yields a stable equivalence 
between themselves \cite{R2}. 

For an Okuyama-Rickard complex $T(i)$ of a symmetric algebra $A$, 
we denote by $F_i:\smod A\to\smod \mu_i^+(A)$ the stable equivalence between $A$ and $\mu_i^+(A)$ 
induced by $T(i)$.

The syzygy of any module is denoted by $\Omega$.

Then we can exactly describe a $\mu_i^+(A)$-module $F_i(X)$ for every $A$-module $X$. 

\begin{proposition}\cite{O}(see also \cite[Lemma 3.4]{A2})\label{simples}
Let $A=kQ/I$ be a symmetric algebra and fix a vertex $i$ of $Q$. 
Then the following gives a complete list of simple $\mu_i^+(A)$-modules:
\begin{enumerate}[{\rm (1)}]
\item $F_i(\Omega S_i)$;
\item $F_i(S_j)$ if there is no such arrow $i\to j$ of $Q$;
\item $F_i(X_j)$ where $X_j$ is maximal among submodules $X$ of $P_j$ such that 
only $S_i$ appears in a composition factor of $X/S_j$,
if there is an arrow $i\to j$ of $Q$.
\end{enumerate}
\end{proposition}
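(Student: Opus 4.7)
The plan is to use the tilting-induced derived equivalence $R\Hom_{\Db(\mod A)}(T(i),-)\colon \Db(\mod A)\xrightarrow{\sim}\Db(\mod B)$, where $B:=\mu_i^+(A)=\End_{\Db(\mod A)}(T(i))$. Since $A$ (and hence $B$) is symmetric, this descends on stable categories to the stable equivalence $F_i$. Under the derived equivalence, a stalk complex of an $A$-module $M$ corresponds to a stalk complex of a simple $B$-module at vertex $j$ precisely when
\[
\dim\Hom_{\Db(\mod A)}(T_k, M[n])=\delta_{k,j}\,\delta_{n,0}\qquad(k\in Q_0,\ n\in\mathbb{Z}).
\]
Since the list (1)--(3) has exactly $|Q_0|$ terms and simple modules of a basic algebra are determined up to isomorphism by the simple tops they label, it suffices to verify this numerical criterion for each candidate.

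For $k\neq i$ the complex $T_k=P_k$ is a stalk projective, so $\Hom_{\Db(\mod A)}(T_k,N)=\Hom_A(P_k,N)$ while all shifted Homs vanish. For $k=i$ I would invoke Proposition \ref{Cartan} on $T_i=(P\xrightarrow{\pi_i}P_i)$, which turns the calculation into the alternating sum $\dim\Hom_A(P_i,N)-\dim\Hom_A(P,N)$, with the higher-shift vanishing coming from $T_i$ being concentrated in two consecutive degrees against a stalk $N$. Thus the whole argument reduces to computing three ordinary $\Hom$-spaces for each candidate $M$.

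For case (2), taking $M=S_j$ with $j\neq i$ and no arrow $i\to j$: the Homs for $k\neq i$ give $\delta_{k,j}k$ at once, and the $k=i$ alternating sum vanishes because $P_j$ is not a summand of $P$ (no arrow $i\to j$) and $i\neq j$. For case (3), taking $M=X_j$: the simple top of $X_j$ is $S_j$ and its composition factors lie in $\{S_i,S_j\}$, so $\Hom_A(P_k,X_j)=\delta_{k,j}k$ for $k\neq i$; the key input is that the maximality of $X_j$ among submodules of $P_j$ with $X_j/S_j$ being $S_i$-isotypic is exactly what makes $\dim\Hom_A(P_i,X_j)=\dim\Hom_A(P,X_j)$ in the alternating sum, using $P\to P_i\to e_iA/e_iA(1-e_i)A\to 0$. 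I expect this step to be the main technical obstacle, particularly when there is a loop at $i$ (so $e_iA/e_iA(1-e_i)A$ is not simple); one must trace the $S_i$-composition factors above $S_j$ in $X_j$ against those in the kernel of $\pi_i$.

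For case (1), taking $M=\Omega S_i$: for $k\neq i$, $\Hom_A(P_k,\Omega S_i)\hookrightarrow \Hom_A(P_k,P_i)$, and the existence of an arrow $i\to k$ would force the map to factor through the projective cover $P\twoheadrightarrow \Omega S_i$, cancelling in the alternating sum via $T_i$; if no such arrow exists the Hom is already zero. For $k=i$, minimality of $\pi_i$ yields a canonical one-dimensional contribution from the surjection $P\twoheadrightarrow\Omega S_i$ composed into $P_i$, and the alternating sum evaluates to $1$. After these verifications, the $|Q_0|$ candidate modules produce $|Q_0|$ pairwise non-isomorphic simple $B$-modules, hence exhaust the simples of $B$.
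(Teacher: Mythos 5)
You should first note that the paper itself does not prove this proposition: it is quoted from \cite{O} and \cite[Lemma 3.4]{A2}. Your overall strategy --- transporting the candidate modules through the derived equivalence induced by $T(i)$ and testing them against the summands $T_k$ --- is the standard one, but as written it contains a genuine gap in case (1). For $k\neq i$ the complex $T_k=P_k$ is a stalk projective, so $\Hom_{\Db(\mod A)}(T_k,\Omega S_i)=\Hom_A(P_k,\Omega S_i)\cong e_i(\operatorname{rad}A)e_k$, which is \emph{nonzero} for every $k\neq i$ reachable from $i$ by a nonzero path; there is no ``cancelling via $T_i$'' available against a different summand. In other words, $R\Hom_A(T(i),\Omega S_i)$ is not a stalk simple module, and your numerical criterion, which characterizes honest isomorphisms in $\Db(\mod B)$, fails for this candidate. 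The point is that the stable functor $F_i$ agrees with the derived functor only modulo $\Kb(\proj)$, so the verification must be carried out in $\Db(\mod B)/\Kb(\proj B)\simeq\smod B$. The repair is to replace $\Omega S_i$ by $S_i[-1]$ (they agree in that quotient) and show instead that $R\Hom_A(T(i),S_i)$ is the simple $B$-module at $i$ shifted by $[1]$, which reduces to showing that $P_i$ is not a direct summand of $P$, i.e.\ that $S_i$ does not occur in the top of $e_iA(1-e_i)A$.

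There is a second, logical, gap in your use of Proposition \ref{Cartan}: that formula presupposes $\Hom_{\Db(\mod A)}(T_i,M[n])=0$ for all $n\neq0$, which is part of what you need to prove. For a stalk module $M$ one has
\[
\Hom_{\Db(\mod A)}(T_i,M[-1])=\ker\bigl(\pi_i^*\colon\Hom_A(P_i,M)\to\Hom_A(P,M)\bigr),\qquad
\Hom_{\Db(\mod A)}(T_i,M)=\operatorname{coker}(\pi_i^*),
\]
and an Euler-characteristic count only shows these two spaces have equal dimension, not that both vanish. This is harmless in case (2), where $\Hom_A(P_i,S_j)=0$ forces the kernel to vanish, but in case (3) one has $\dim\Hom_A(P_i,X_j)=[X_j:S_i]>0$, so you must prove directly that $\pi_i^*$ is bijective; that in turn requires identifying the multiplicities of the $P_k$ in $P$ (the top of $e_iA(1-e_i)A$), which your sketch leaves as a black box. (There is also a sign slip: the alternating sum is $\dim\Hom_A(P,N)-\dim\Hom_A(P_i,N)$, since $P$ sits in degree $0$ and $P_i$ in degree $1$.) With these two points addressed --- working modulo perfect complexes for case (1), and establishing injectivity and surjectivity of $\pi_i^*$ separately rather than via the Cartan formula --- your plan does go through.
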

%
%
%
%

\subsection{Extensions among simple modules}\label{ext}

The shape of the quiver of a given algebra is determined by the dimensions of extensions among simple modules.
This subsection is devoted to giving them for the endomorphism algebra of an Okuyama-Rickard complex
of a symmetric special biserial algebra.

Throughout this subsection, assume that $A=kQ/I$ is a symmetric special biserial algebra and fix a vertex $i$ of $Q$. 
We put $B:=\mu_i^+(A)$ and still denote by $S_j$ a simple $B$-module corresponding to $T_j$ in Definition-Theorem \ref{OR complex}. 

To calculate the dimensions of extensions among simple $B$-modules, 
we use Proposition \ref{simples} and feel free to utilize four formulas
\begin{eqnarray*}
\Ext_A^1(X,Y)\simeq\Ext_B^1(F_iX,F_iY); &
\Ext_A^1(X, Y)\simeq\sHom_A(\Omega X, Y); \\
\sHom_A(X, S)=\Hom_A(X, S); &
\sHom_A(S, X)=\Hom_A(S,X) 
\end{eqnarray*}

for any $A$-modules $X, Y$ and simple $A$-module $S$.

Let $j$ and $j'$ be vertices of $Q$ distinct from $i$. 
\begin{enumerate}[(1)]
\item
\begin{enumerate}[(i)]
\item If there is neither arrows $\xymatrix{i\ar[r]&j}$ nor $\xymatrix{i\ar[r]&j'}$,
then we observe an isomorphism $\Ext_B^1(S_j,S_{j'})\simeq \Ext_A^1(S_j, S_{j'})$.
\item If there is no arrow $\xymatrix{i\ar[r]&j}$ but is an arrow $\xymatrix{i\ar[r]&j'}$,
then we obtain isomorphisms 
\[\def\arraystretch{1.3}
\begin{array}{rl}
\Ext_B^1(S_j,S_{j'}) &\simeq \Ext_A^1(S_j,X_{j'}) \\
                     &\simeq \sHom_A(S_j, \Omega^{-1}X_{j'})\\
                     &= \Hom_A(S_j, \Omega^{-1}X_{j'}).
\end{array}\]
This implies that $\dim\Ext_B^1(S_j,S_{j'})$ is given by
\[\begin{cases}
\ 1 & \mbox{if there is an arrow } \xymatrix{j\ar[r]&j'}\ \mbox{of }Q; \\
\ 1 & \mbox{if there is a cycle containing } \xymatrix{j \ar[r] & i \ar[r] & j'}\ \mbox{of }Q;\\\\
\ 1 & \mbox{if there is a cycle containing } \xymatrix{j \ar[r] & i \ar[r]\ar@(ul,ur) & j'}\ \mbox{of }Q;\\
\ 2 & \mbox{if there are two cycles containing } \xymatrix{j \ar[r] & i \ar[r] & j'}\ \mbox{of }Q;\\
\ 0 & \mbox{otherwise}.
\end{cases}\]
\item If there is an arrow $\xymatrix{i\ar[r]&j}$ but is no arrow $\xymatrix{i\ar[r]&j'}$,
then we see isomorphisms
\[\def\arraystretch{1.3}
\begin{array}{rl}
\Ext_B^1(S_j,S_{j'}) &\simeq \Ext_A^1(X_j, S_{j'}) \\
                     &\simeq \sHom_A(\Omega X_j,S_{j'}) \\
                     &= \Hom_A(\Omega X_j,S_{j'}).
\end{array}\]
Thus we get that $\dim\Ext_B^1(S_j,S_{j'})=1$ if there exists a cycle containing $\xymatrix{i\ar[r]&j\ar[r]&j'}$ of $Q$ 
and no double arrow $\xymatrix{i \ar@<0.2pc>[r]\ar@<-0.2pc>[r] & j}$, otherwise it is zero. 
\item Assume that there are both arrows $\xymatrix{i\ar[r] & j}$ and $\xymatrix{i\ar[r] & j'}$. 
Then we observe isomorphisms 
$\Ext_B^1(S_j,S_{j'})\simeq \Ext_A^1(X_j,X_{j'}) \simeq \sHom_A(X_j,\Omega^{-1}X_{j'})$.
Since we have no non-zero homomorphism from $S_i$ to $\Omega^{-1}X_{j'}$, 
any non-zero homomorphism $X_j\to \Omega^{-1}X_{j'}$ is monomorphism. 
Therefore all projective homomorphisms from $X_j$ to $\Omega^{-1}X_{j'}$ are zero, 
and so we see an equality $\sHom_A(X_j,\Omega^{-1}X_{j'})=\Hom_A(X_j,\Omega^{-1}X_{j'})$. 
Thus we obtain $\dim\Ext_B^1(S_j,S_{j'})$ as follows:
\begin{itemize}
\item In the case of $j\neq j'$:
\[\begin{cases}
\ 1 & \mbox{if there is a cycle containing } \xymatrix{i \ar[r] & j \ar[r] & i \ar[r] & j'}\ \mbox{of }Q;\\
\ 1 & \mbox{if there is a cycle containing } \xymatrix{i \ar[r] & j \ar[r] & j'}\ \mbox{of }Q;\\
\ 0 & \mbox{otherwise}.
\end{cases}\]
\item In the case of $j=j'$:
\[\begin{cases}
\ 1 & \mbox{if there is a cycle } \xymatrix{i \ar@<0.2pc>[r] & j \ar@<0.2pc>[l]}\ \mbox{or }
      \xymatrix{i \ar@<0.2pc>[r] \ar@(dl,ul)^{} & j \ar@<0.2pc>[l]}\ \mbox{of }Q \\ 
    & \mbox{with multiplicity exactly greater than 1}; \\
\ 1 & \mbox{if there is a cycle containing } \xymatrix{i \ar[r] & j \ar@(ru,rd)^{}}\ \mbox{of }Q; \\
\ 0 & \mbox{otherwise}.
\end{cases}\]
\end{itemize}
\end{enumerate}

\item 
\begin{enumerate}[(i)]
\item Assume that there is no arrow $\xymatrix{i\ar[r]&j}$. 
\begin{enumerate}[(a)]
\item We have isomorphisms 
\[\Ext_B^1(S_j,S_i)\simeq \Ext_A^1(S_j,\Omega S_i) 
                   \simeq \sHom_A(S_j,S_i)=0.\]
\item We obtain isomorphisms
\[\def\arraystretch{1.3}
\begin{array}{rl}
\Ext_B^1(S_i,S_j) &\simeq \Ext_A^1(\Omega S_i,S_j) \\
                  &\simeq \sHom_A(\Omega^2S_i,S_j) \\
                  &=\Hom_A(\Omega^2S_i,S_j),
\end{array}\]
which implies that $\dim\Ext_B^1(S_i,S_j)$ coincides with the number of paths 
$\xymatrix{i \ar[r]^\alpha & h \ar[r]^\beta & j}$ of $Q$ with $\alpha\beta\in I$.
\end{enumerate}
\item Assume that there is an arrow $\xymatrix{i\ar[r]&j}$.
\begin{enumerate}[(a)]
\item We observe isomorphisms
\[\def\arraystretch{1.3}
\begin{array}{rl}
\Ext_B^1(S_j,S_i) &\simeq \Ext_A^1(X_j,\Omega S_i) \\
                  &\simeq \sHom_A(X_j, S_i) \\
                  &= \Hom_A(X_j,S_i),
\end{array}\]
and so one sees that $\dim\Ext_B^1(S_j,S_i)=\dim\Ext_A^1(S_i,S_j)$.
\item We have isomorphisms
\[\def\arraystretch{1.3}
\begin{array}{rl}
\Ext_B^1(S_i,S_j) &\simeq \Ext_A^1(\Omega S_i,X_j) \\
                  &\simeq \sHom_A(S_i,\Omega^{-2}X_j) \\
                  &=\Hom_A(S_i,\Omega^{-2}X_j).
\end{array}\]
Thus we get the following equalities.
\begin{itemize}
\item[(b1)] In the case that $P_j$ is uniserial:
\[\dim\Ext_B^1(S_i,S_j)=\begin{cases}
\ 2 & \mbox{if there is a path } \xymatrix{h \ar@<0.2pc>[r]^\beta & i \ar@<0.2pc>[l]^\alpha \ar[r]^\gamma & j}\ 
      \mbox{of }Q\\ & \mbox{with } \alpha\beta\in I\ \mbox{and } \beta\gamma\not\in I;\\
\ 1 & \mbox{otherwise}.
\end{cases}\]
\item[(b2)] In the case that $P_j$ is non-uniserial:
$\dim\Ext_B^1(S_i,S_j)$ is the number of paths 
(b2-1) $\xymatrix{i \ar[r]^\alpha & h \ar[r]^\beta & j}$ and
(b2-2) $\xymatrix{h \ar@<0.2pc>[r]^\beta & i \ar@<0.2pc>[l]^\alpha \ar[r]^\gamma & j}$
with $\alpha\beta\in I$ and $\beta\gamma\not\in I$.
\end{itemize}
\end{enumerate}
\end{enumerate}

\item We have isomorphisms $\Ext_B^1(S_i,S_i)\simeq \Ext_A^1(\Omega S_i,\Omega S_i)\simeq \Ext_A^1(S_i,S_i)$.
\end{enumerate}

\subsection{Proof of our theorem}

Our theorem is proved dividing into three cases as well as the definition of mutation of SB quivers.

\subsubsection{Non-multiplex case}

We show Theorem \ref{compatibility} in the case of non-multiplex.
Assume that $A=kQ/I$ is a symmetric special biserial algebra and is non-multiplex at $i$.
We use the notation of Definition \ref{QM} and Definition-Theorem \ref{OR complex}. 

\begin{proof}
(1) We first show that the quiver of $\mu_i^+(A)$ coincides with $\mu_i^+(Q,\C)$.
\begin{enumerate}[(QM1)]
\item The calculations \ref{ext} (1)(ii) and (1)(iv) yield the new arrows of (QM1-1) and (QM1-2), respectively. 

\item[(QM3)] By \ref{ext} (2)(ii)(a), we reverse arrows $\xymatrix{i \ar[r] & j}$. 
Using  \ref{ext} (2)(i)(b) and (2)(ii)(b2-1), we get the new arrows of (QM3-1).
To obtain the new arrows of (QM3-2), 
we need to consider two cases: 
If $P_j$ is uniserial, then they occur by \ref{ext} (2)(ii)(b1).
If $P_j$ is non-uniserial, then we have the new arrows of (QM3-2) by considering $h=j$ and $\gamma=\alpha$ in \ref{ext} (2)(ii)(b2-2).
\end{enumerate}

It is observed that the other arrows do not change. 

(2) We show that the cycle-decomposition of $\mu_i^+(A)$ is given by Definition \ref{relations of QM}. 
Note that $\mu_i^+(A)$ is symmetric special biserial by Lemma \ref{ssb under derived}.
\begin{enumerate}[(QM1)]
\item We consider a new arrow $x$ as in (QM1). 
One defines a morphism $T_j\to T_h$ corresponding to $x$ as follows:
\[x:P_j\xrightarrow{\beta}P_i\xrightarrow{\alpha}P_h\ \mbox{or } 
P_j\xrightarrow{\beta}P_i\xrightarrow{\gamma}P_i\xrightarrow{\alpha}P_h.\]
\begin{enumerate}[(QM1-1)]
\item Assume that $h\neq j$. 
If an arrow $y$ satisfies $y\alpha\not\in I$ or $\beta y\not\in I$,
then we obtain $C_y=C_\alpha$ or $C_y=C_\beta$.
This implies that $yx\neq0$ or $xy\neq0$ in $\mu_i^+(A)$, 
which means that $C_x$ is given by replacing $\alpha\beta$ by $x$.
\item[(QM1-2)] Assume that $h=j, \beta\alpha\not\in I$ and $\mult(C_\alpha)>1$. 
Then we can check $x^2\neq0$ in $\mu_i^+(A)$, which implies that $C_x$ has only one arrow $x$. 
\end{enumerate}
\item[(QM3)] We consider new arrows $x$ and $y$ as in (QM3). 
\begin{enumerate}[(QM3-1)]
\item In the case (QM3-1), we define morphisms $T_i\to T_h$ and $T_j\to T_i$ corresponding to $x$ and $y$, respectively:
\[\xymatrix{
T_i: \ar@<-0.4pc>[d]_x & P_h\oplus P' \ar[r]^(0.6){[\alpha,f]} \ar[d]_{[1,0]} & P_i \ar[d] & & 
T_j: \ar@<-0.4pc>[d]_y & P_j \ar[r] \ar[d]_{\left[\begin{smallmatrix}\beta\\ 0\end{smallmatrix}\right]} & 0 \ar[d] \\
T_h: & P_h \ar[r] & 0 & \mbox{and} & 
T_i: & P_h\oplus P' \ar[r]_(0.6){[\alpha,f]} & P_i
}\]
Then we see $xy=\beta$,
which implies that $C_x=C_y$ and these are obtained by replacing $\beta$ in $C_\beta$ by $xy$.
\item In the case (QM3-2), we have to consider two cases: 
One is that there exists an arrow $\xymatrix{h\ar[r]^\beta&i}$, 
and the other is that there is no such arrow. 
To avoid confusion, we write the arrow $\alpha$ of $\mu_i^+(Q,\C)$ by $\alpha'$. 

(i) Assume that there exists an arrow $\xymatrix{h\ar[r]^\beta&i}$, 
and then we have arrows $\xymatrix{i \ar@<0.2pc>[r]^\alpha & h \ar@<0.2pc>[l]^\beta \ar[r]^\gamma & j}$ of $Q$ 
with $\alpha\gamma\not\in I$.
Since $A$ is special biserial, we observe that $\alpha\beta$ belongs to $I$.
By our assumption, we obtain that $\beta\alpha\not\in I$.
We define morphisms $T_i\to T_h$ and $T_h\to T_i$ corresponding to $x$ and $\alpha'$ of $\mu_i^+(Q,\C)$
as follows:
\[\xymatrix{
T_i: \ar@<-0.4pc>[d]_x & P_h\oplus P' \ar[r]^(0.6){[\alpha,f]} \ar[d]_{[1,0]} & P_i \ar[d] & & 
T_h: \ar@<-0.4pc>[d]_{\alpha'} & P_h \ar[r] \ar[d]_{\left[\begin{smallmatrix}\beta\alpha\\0\end{smallmatrix}\right]} & 0 \ar[d] \\
T_h: & P_h \ar[r] & 0 & \mbox{and} & 
T_i: & P_h\oplus P' \ar[r]_(0.6){[\alpha,f]} & P_i
}\]
We can easily check $x\alpha'\neq0$ and $\alpha' x=0$ in $\mu_i^+(A)$. 
Moreover, an arrow $\delta$ with $\delta\beta\not\in I$ satisfies $\delta x\neq0$ in $\mu_i^+(A)$. 
Thus we see that $C_x$ is given by replacing $\beta$ by $x$. 

(ii) Assume that there is no such arrow $\xymatrix{h\ar[r]^\beta&i}$.
Then $P_h$ is uniserial.

(a) If there is no loop at $i$ in $C_\alpha$, then we define morphisms $T_i\to T_h$ and $T_h\to T_i$ 
corresponding to $x$ and $\alpha'$ of $\mu_i^+(Q,\C)$ as follows:
\[\xymatrix{
T_i: \ar@<-0.4pc>[d]_x & P_h\oplus P' \ar[r]^(0.6){[\alpha,f]} \ar[d]_{[1,0]} & P_i \ar[d] & & 
T_h: \ar@<-0.4pc>[d]_{\alpha'} & P_h \ar[r] \ar[d]_{\left[\begin{smallmatrix}\pi\\0\end{smallmatrix}\right]} & 0 \ar[d] \\
T_h: & P_h \ar[r] & 0 & \mbox{and} & 
T_i: & P_h\oplus P' \ar[r]_(0.6){[\alpha,f]} & P_i
}\]
where $\pi$ is the composition of the canonical epimorphism $P_h\to S_h$ and the canonical inclusion $S_h\to P_h$.
It is easy to see that $x\alpha'=\pi\neq0$. 
Let $\gamma$ be the unique arrow of $Q$ starting at $h$ with $\alpha\gamma\not\in I$. 
We have $\alpha'\gamma=0$ in $\mu_i^+(A)$.
Since the number of arrows of $\mu_i^+(Q,\C)$ starting at $h$ is at most two,
we obtain $\alpha' x\neq0$.
Thus, $C_x$ is a new cycle $\xymatrix{i \ar@<0.2pc>[r]^{\alpha'} & h \ar@<0.2pc>[l]^x}$. 

(b) Assume that there is a loop $\beta$ at $i$ in $C_\alpha$. 
Then we observe that $\beta^2\in I$ and that the complex $T_i$ has of the form $P_h\oplus P_h\xrightarrow{[\alpha,\beta\alpha]}P_i$.
To avoid confusion, we write $\beta$ in $\mu_i^+(Q,\C)$ by $\beta'$.
We define morphisms $T_i\to T_h$, $T_h\to T_i$ and $T_i\to T_i$ corresponding to $x$, $\alpha'$ and $\beta'$ as follows:
\[\xymatrix{
T_i: \ar@<-0.4pc>[d]_x & P_h\oplus P_h \ar[r]^(0.6){[\alpha,\beta\alpha]} \ar[d]_{[0,1]} & P_i \ar[d] & & 
T_h: \ar@<-0.4pc>[d]_{\alpha'} & P_h \ar[r] \ar[d]_{\left[\begin{smallmatrix}\pi\\0\end{smallmatrix}\right]} & 0 \ar[d] \\
T_h: & P_h \ar[r] & 0 & , & 
T_i: & P_h\oplus P_h \ar[r]_(0.6){[\alpha,\beta\alpha]} & P_i
}\]
\[\xymatrix{
T_i: \ar@<-0.4pc>[d]_{\beta'} & P_h\oplus P_h \ar[r]^(0.6){[\alpha,\beta\alpha]} 
\ar[d]_{\left[\begin{smallmatrix}0&0\\ 1&0\end{smallmatrix}\right]} & P_i \ar[d]^\beta \\
T_i: & P_h\oplus P_h \ar[r]_(0.6){[\alpha,\beta\alpha]} & P_i 
}\]
Then we get $x\beta'\alpha'=\pi$.
It follows from the same argument above that $\alpha' x\neq0$ in $\mu_i^+(A)$. 
Thus we see that $C_x$ is a new cycle 
$\begin{array}{c}\xymatrix@C=0.5cm @R=0.5cm{
 & i \ar[dr]^{\alpha'} & \\ i \ar[ur]^{\beta'} & & h \ar[ll]^x 
}\end{array}$.
\end{enumerate}
\end{enumerate}

(3) Finally, we give the multiplicity of each cycle of $\mu_i^+(Q,\C)$. 
Note that it is determined by the Cartan data of $\mu_i^+(A)$. 
To do this, we use the formula of Proposition \ref{Cartan}. 
Since every indecomposable projective $A$-module $P_j$ distinct from $P_i$ is a direct summand of $T(i)$,
the Cartan datum $\dim\Hom_{\Db(\mod A)}(T_h,T_j)$ for $h\neq i$ and $j\neq i$ is equal to $\dim\Hom_A(P_h,P_j)$.
Thus, we have only to show that the multiplicity of a cycle as in Definition \ref{relations of QM} (2)(ii)(b) is 1.
Put $m:=\mult(C_\alpha)$.

Assume that there is no loop at $i$ in $C_\alpha$.
Let $\beta:\xymatrix{i\ar[r]&j}$ be an arrow of $Q$ distinct from $\alpha$ if it exists.
Then the complex $T_i$ has of the form $P_h\oplus P_j\xrightarrow{[\alpha,\beta]}P_i$ if $\beta$ exists, 
otherwise $T_h\xrightarrow{\alpha}P_i$.
\begin{itemize}
\item If $\beta$ exists and $C_\beta=C_\alpha$, 
then we have equalities
\[\def\arraystretch{1.3}
\begin{array}{rl}
\dim\Hom_{\Db(\mod A)}(T_h,T_i) &= \dim\Hom_A(P_h,P_h)+\dim\Hom_A(P_h,P_j) \\ 
                                &\ \ \ -\dim\Hom_A(P_h,P_i) \\
                                &= (m+1)+ m -2m \\
                                &= 1.
\end{array}\]
\item Otherwise,
\[\def\arraystretch{1.3}
\begin{array}{rl}
\dim\Hom_{\Db(\mod A)}(T_h,T_i) &= \dim\Hom_A(P_h,P_h)-\dim\Hom_A(P_h,P_i) \\
                                &= (m+1)-m \\
                                &= 1.
\end{array}\]
\end{itemize}

If there is a loop $\beta$ at $i$ of $Q$ in $C_\alpha$, 
then the complex $T_i$ has of the form $P_h\oplus P_h\xrightarrow{[\alpha,\beta\alpha]}P_i$. 
We obtain equalities
\[\def\arraystretch{1.3}
\begin{array}{rl}
\dim\Hom_{\Db(\mod A)}(T_h,T_i) &= 2\cdot\dim\Hom_A(P_h,P_h)-\dim\Hom_A(P_h,P_i) \\
                                &= 2(m+1)-2m \\
                                &= 2.
\end{array}\]

These give that the multiplicity of our cycle is 1.
\end{proof}

\subsubsection{Multiplex case (1)}

We prove Theorem \ref{compatibility} in the case of multiplex.
Assume that $A=kQ/I$ is a symmetric special biserial algebra and is multiplex at $i$. 
We use the notation of Definition \ref{QM multiplex} and Definition-Theorem \ref{OR complex}. 
Suppose that $j'\neq h$.

\begin{proof}
(1) We first show that the quiver of $\mu_i^+(A)$ coincides with $\mu_i^+(Q,\C)$.
\begin{enumerate}[(QM1)']
\item The calculations \ref{ext} (1)(ii) and (1)(iv) yield the new arrows of (QM1-1)' and (QM1-2)', respectively. 
However, \ref{ext} (1)(iv) says that there is no new arrow from $j$ to $h$.
\item The vertex $j$ belongs to exactly two cycles: One is $C_\alpha$ and the other is $C_\beta$. 
By \ref{ext} (2)(ii)(b2), we have no arrow from $i$ to $j$, which means that we remove $\alpha$. 
If $j'\neq j$, by \ref{ext} (2)(i)(a), there is no new arrow from $j'$ to $i$.
If $j'=j$, by \ref{ext} (2)(ii)(a) and $h\neq j$, the number of new arrows from $j$ to $i$ is 1.
Therefore we remove $\alpha'$.
\item The new arrows of (QM3-1)' are obtained by the same argument in the proof to non-multiplex case. 
We consider new arrows of (QM3-2)', which occurs if $P_h$ is uniserial: Note that $h\neq j$.
It follows from \ref{ext} (2)(ii)(b1) that the number of arrows from $i$ to $h$ is exactly two: One is $\beta'$ and 
the other is a new arrow $y$. 
By \ref{ext} (2)(ii)(a), we have exactly one arrow from $h$ to $i$, which is a new arrow $x$.
\end{enumerate}

It is observed that the other arrows do not change. 

(2) We show that the cycle-decomposition of $\mu_i^+(A)$ is given by Definition \ref{relations of QM multiplex}. 
Note that $\mu_i^+(A)$ is symmetric special biserial by Lemma \ref{ssb under derived}.
By considering the same argument in the proof to non-multiplex case, 
we have only to give morphisms among the direct summands of $T$ corresponding to $\beta, \beta'$ and $x,y$ of (QM3-2)'. 
Note that $T_i$ has of the form $P_j\oplus P_h\xrightarrow{[\alpha,\beta']}P_i$. 

To avoid confusion, we write the arrows $\beta$ and $\beta'$ of $\mu_i^+(Q,\C)$ by $\beta_*$ and $\beta'_*$, respectively. 
We define morphisms of $T_i\to T_j$ and $T_h\to T_i$ corresponding to $\beta_*$ and $\beta'_*$ as follows:
\[\xymatrix{
T_i: \ar@<-0.4pc>[d]_{\beta_*} & P_j\oplus P_h \ar[r]^(0.6){[\alpha,\beta']} \ar[d]_{[1,0]} & P_i \ar[d] & & 
T_h: \ar@<-0.4pc>[d]_{\beta'_*} & P_h \ar[r] \ar[d]_{\left[\begin{smallmatrix}\beta\beta'\\0\end{smallmatrix}\right]} & 0 \ar[d] \\
T_j: & P_j \ar[r] & 0   & \mbox{and} & 
T_i: & P_j\oplus P_h \ar[r]_(0.6){[\alpha,\beta']} & P_i 
}\]

Assume that $P_h$ is uniserial. 
Then we define morphisms $T_i\to T_h$ and $T_h\to T_i$ corresponding to $x$ and $y$ as follows:
\[\xymatrix{
T_i: \ar@<-0.4pc>[d]_{x} & P_j\oplus P_h \ar[r]^(0.6){[\alpha,\beta']} \ar[d]_{[0,1]} & P_i \ar[d] & & 
T_h: \ar@<-0.4pc>[d]_{y} & P_h \ar[r] \ar[d]_{\left[\begin{smallmatrix}0\\\pi\end{smallmatrix}\right]} & 0 \ar[d] \\
T_h: & P_h \ar[r] & 0   & \mbox{and} & 
T_i: & P_j\oplus P_h \ar[r]_(0.6){[\alpha,\beta']} & P_i 
}\]

(3) We get the multiplicity of each cycle by the same argument in the proof to non-multiplex case. 

Thus the proof is complete.
\end{proof}

\subsubsection{Multiplex case (2)}

Finally we have to show Theorem \ref{compatibility} in the case of subsection \ref{triple multiplex}.
However it follows from the same argument in the proof to non-multiplex and multiplex cases.


\end{document}